\newtheorem{proposition}{Proposition}[section]
\newtheorem{theorem}[proposition]{Theorem}
\newtheorem{lemma}[proposition]{Lemma}
\newtheorem*{bootstrap*}{Bootstrap Step}
\theoremstyle{definition}
\newtheorem{remark}[proposition]{Remark}
\numberwithin{equation}{section}
\newcommand{\scalar}[2]{\langle #1, #2 \rangle}
\newcommand{\Scalar}[2]{\big\langle #1, #2 \big\rangle}
\newcommand{\dt}{\partial_t}
\newcommand{\T}{\mathbb{T}}
\newcommand{\ZZ}{\mathbb{Z}}
\newcommand{\R}{\mathbb{R}}
\newcommand{\N}{\mathbb{N}}
\newcommand{\dr}{\partial_r}
\newcommand{\drr}{\partial_{rr}}
\newcommand{\dtheta}{\partial_\theta}
\newcommand{\dthetatheta}{\partial_{\theta \theta}}
\newcommand{\Dt}{\frac{\dd}{\dd t}}
\newcommand{\norma}[2]{\lVert #1 \rVert_{#2}}
\newcommand{\Norma}[2]{\big\lVert #1 \big\rVert_{#2}}
\newcommand\uu {\boldsymbol{u}}
\def \l {\langle}
\def \r {\rangle}
\newcommand\dd{{\rm d}}
\newcommand\eps{\varepsilon}
\newcommand\e{{\rm e}}
\renewcommand{\aa}{\alpha} % 1° coefficient for energy functional
\newcommand{\bb}{\beta} % 2° coefficient for energy functional
\newcommand{\cc}{\gamma} % 3° coefficient for energy functional
\newcommand{\pp}{q} % Exponent of the power law
\title[Separation of time-scales in drift-diffusion equations]{Separation of time-scales in drift-diffusion equations on $\mathbb{R}^2$}
\author[M. Coti Zelati and M. Dolce]{Michele Coti Zelati and Michele Dolce}
\address{Department of Mathematics, Imperial College London, London, SW7 2AZ, UK}
\email{m.coti-zelati@imperial.ac.uk}
\address{GSSI - Gran Sasso Science Institute, 67100, L'Aquila, Italy}
\email{michele.dolce@gssi.it}
\subjclass[2000]{35K15, 35Q35, 76F25}
\keywords{Mixing, enhanced dissipation, circular flows, advection-diffusion equation, passive scalar, hypocoercivity}
\begin{document}

	\begin{abstract}
		We deal with the problem of separation of time-scales and filamentation in a linear drift-diffusion problem posed 
		on the whole space $\R^2$. The passive scalar considered is stirred by an incompressible flow with radial symmetry.
		We identify a time-scale, much faster than the diffusive one, at which mixing happens \emph{along} the streamlines, 
		as a result of the interaction between transport and diffusion. This effect is also known as \emph{enhanced dissipation}.
		For power-law circular flows, this time-scale only depends on the behavior of the flow at the origin. The proofs are based 
		on an adaptation of a hypocoercivity scheme and yield a linear semigroup estimate in a suitable weighted $L^2$-based space.
	\end{abstract}
	\maketitle
\section{Introduction}

We consider the solution $f(t,x,y):[0,+\infty)\times \R^2 \to \R$ to the linear advection-diffusion equation 
\begin{equation}\label{eq:advecdiff}
	\begin{cases}
	\dt f+\uu\cdot \nabla f=\nu \Delta f, \quad &\text{in } \R^2, \ t\geq 0,\\
	f|_{t=0}=f^{in}, &\text{in }\R^2,
	\end{cases}
\end{equation}
where $\nu>0$ denotes the diffusivity coefficient, $f^{in}$ is an assigned mean-free initial datum, and $\uu:\R^2\to \R^2$ is a regular, radially symmetric and divergence-free 
velocity field given by
\begin{equation}\label{def:u}
	\uu(x,y)=\left(x^2+y^2\right)^{\pp/2}\begin{pmatrix}
	-y\\ x
	\end{pmatrix},
\end{equation}	
with $\pp\geq 1$ an arbitrary fixed exponent.
In this way, the background velocity field generates a counter-clockwise rotating motion, with a shearing effect across streamlines.
By passing to polar coordinates $(r,\theta)\in [0,\infty)\times \T$ in  \eqref{eq:advecdiff}, we deduce that
\begin{align}\label{eq:cauchycirc}
\begin{cases}
\dt f+ r^{\pp} \dtheta f=\nu\Delta f, \quad &\text{in } (r,\theta)\in [0,\infty)\times \T, \ t\geq 0,\\
f|_{t=0}=f^{in}, \quad &\text{in } (r,\theta)\in [0,\infty)\times \T.
\end{cases}
\end{align}
Here and in what follows, $\Delta$ denoted the Laplace operator in polar coordinates, namely
\begin{align}
\Delta=\drr+\frac{1}{r} \dr +\frac{1}{r^2}\dthetatheta.
\end{align}
The object of study of this paper is the sharp decay properties of the solution to \eqref{eq:cauchycirc}. Before stating our main result, 
we notice that the average of circles, defined for every $r\geq 0$ by
\begin{align}
\l f\r_\theta(t,r)=\frac{1}{2\pi}\int_\T f(t,r,\theta) \dd \theta,
\end{align}
satisfies the two-dimensional radially symmetric heat equation
\begin{align}\label{eq:thetaav}
\begin{cases}
\dt \l f\r_\theta=\nu \left(\drr+\frac{1}{r} \dr\right)\l f\r_\theta, \quad &\text{in } r\in [0,\infty), \ t\geq 0,\\
 \l f\r_\theta|_{t=0}= \l f^{in}\r_\theta, \quad &\text{in } r\in [0,\infty).
\end{cases}
\end{align}
Intuitively speaking, $\l f\r_\theta$ should have a decay time-scale proportional to $1/\nu$, in agreement with the classical diffusion
time-scale, while, due to the presence of the background flow, $f-\l f\r_\theta$ decays on a faster time-scale, depending on the 
exponent $\pp$. The main result of this paper is a precise quantification of this effect, in the $L^2$-based norm defined by
\begin{equation}\label{eq:Xnorm}
	\norma{g}{X}^2:=\int_0^\infty\int_\T |g(r,\theta)|^2 r\dd r\dd\theta +\int_0^\infty\int_\T r^{2(\pp-1)}|g(r,\theta)|^2 r\dd r\dd\theta.
\end{equation}
It can be stated as follows.
\begin{theorem}\label{th:mainth}
Let $\pp\geq 1$ and $f^{in}$ be such that $\|f^{in}\|_X<\infty$.
There exist constants $\eps_0\in(0,1)$ and $C_0\geq 1$ (explicitly computable and depending only on $\pp$) such that 
the following holds: for every $\nu\in(0,1]$
there hold the decay estimates 
\begin{align}\label{eq:heatdecay}
\|\l f(t)\r_\theta\|_{L^\infty}\leq \frac{C_0}{\sqrt{\nu t}}  \|\l f^{in}\r_\theta\|_{L^2}, \qquad \forall t\geq0,
\end{align}
and
\begin{align}\label{eq:L2est}
\norma{f(t)-\l f(t)\r_\theta}{X}\leq C_0 \e^{-\eps_0\lambda_{\nu} t}\norma{f^{in}-\l f^{in}\r_\theta}{X}, \qquad \forall t\geq0,
\end{align}
where 
\begin{equation}\label{eq:rate}
\lambda_{\nu}=\frac{\nu^{\frac{\pp}{\pp+2}}}{1+\frac{2(\pp-1)}{\pp+2}|\ln\nu|}
\end{equation}
is the decay rate.
\end{theorem}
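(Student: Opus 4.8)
My plan is to prove the two estimates independently: the radial average \eqref{eq:heatdecay} by a direct heat-kernel bound, and the fluctuation estimate \eqref{eq:L2est} by a mode-by-mode hypocoercivity argument in the radial variable. For the average, since $\l f\r_\theta$ solves the radially symmetric heat equation \eqref{eq:thetaav}, it coincides with $\e^{\nu t\Delta}\l f^{in}\r_\theta$ on $\R^2$, and \eqref{eq:heatdecay} is exactly the $L^2\to L^\infty$ smoothing of the two-dimensional heat semigroup: convolving with the Gaussian kernel $G_{\nu t}$ and using Cauchy--Schwarz gives $\norma{\l f(t)\r_\theta}{L^\infty}\leq \norma{G_{\nu t}}{L^2}\norma{\l f^{in}\r_\theta}{L^2}=(8\pi\nu t)^{-1/2}\norma{\l f^{in}\r_\theta}{L^2}$, which is \eqref{eq:heatdecay} (the constant being $\leq C_0$). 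This part is routine; the substance of the theorem is \eqref{eq:L2est}.

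For the fluctuation $g=f-\l f\r_\theta$, integrating \eqref{eq:cauchycirc} in $\theta$ shows that $\l g\r_\theta\equiv 0$ is preserved and that $g$ solves the same equation. Expanding $g=\sum_{k\neq0}g_k(t,r)\e^{ik\theta}$ and using Plancherel, the $X$-norm decouples as $\norma{g}{X}^2=2\pi\sum_{k\neq0}\big(\norma{g_k}{}^2+\norma{r^{\pp-1}g_k}{}^2\big)$, where $\norma{\cdot}{}$ is the $L^2([0,\infty);r\,\dd r)$ norm, and each mode evolves by $\dt g_k=\nu L_k g_k - i k r^{\pp} g_k$ with $L_k=\drr+\tfrac1r\dr-\tfrac{k^2}{r^2}$. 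The operator $-L_k$ is nonnegative and self-adjoint on $L^2(r\,\dd r)$, with $\scalar{-L_k u}{u}=\norma{\dr u}{}^2+k^2\norma{u/r}{}^2$, so the plain energy identity $\tfrac{\dd}{\dd t}\tfrac12\norma{g_k}{}^2=-\nu\big(\norma{\dr g_k}{}^2+k^2\norma{g_k/r}{}^2\big)$ only yields diffusive decay. It is therefore essential to bring in the shear.

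The enhanced dissipation is driven by the multiplier $m(r)=kr^{\pp}$, whose derivative $m'(r)=\pp k\,r^{\pp-1}$ is the local shear; crucially $\norma{m' g_k}{}^2=\pp^2 k^2\norma{r^{\pp-1}g_k}{}^2$ is precisely the second piece of the $X$-norm. Following the hypocoercivity philosophy I would work with an augmented functional of the form
\[
\Phi_k=\norma{g_k}{}^2+\norma{r^{\pp-1}g_k}{}^2+\alpha\,\norma{\dr g_k}{}^2+2\beta\,\mathrm{Im}\,\scalar{m'(r)\,g_k}{\dr g_k},
\]
with $\nu$- and $k$-dependent weights $\alpha,\beta$ to be tuned. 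The point of the cross term is the identity $\mathrm{Im}\,\tfrac{\dd}{\dd t}\scalar{m' g_k}{\dr g_k}=\pm\norma{m' g_k}{}^2+(\text{commutator and diffusive errors})$, so that, for the correct sign and size of $\beta$, differentiating $\Phi_k$ produces the coercive enhanced-dissipation term $-c\norma{m' g_k}{}^2$, while all error terms — generated by $[\dr,m']$, by the drift $-im\dr g_k$, and by the $\tfrac1r\dr$ and $\tfrac{k^2}{r^2}$ contributions of $L_k$ — are absorbed into the negative diffusive terms $-\nu\norma{\dr g_k}{}^2$, $-\nu k^2\norma{g_k/r}{}^2$ and into $\alpha$ times the evolution of $\norma{\dr g_k}{}^2$. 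Matching the weights to the natural length scale (see below) should give both the equivalence $\Phi_k\simeq\norma{g_k}{}^2+\norma{r^{\pp-1}g_k}{}^2$ and the differential inequality $\tfrac{\dd}{\dd t}\Phi_k\leq -c\,\lambda_\nu\,\Phi_k$ uniformly in $k\neq0$; summing over $k$ and using Plancherel then returns \eqref{eq:L2est}.

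The shape of $\lambda_\nu$ and the main difficulty both come from the behavior at the origin. Since $m'(r)\sim r^{\pp-1}$ degenerates as $r\to0$ (for $\pp>1$), the enhanced term $\norma{m' g_k}{}^2$ loses its grip near $r=0$ and must be balanced against the genuine diffusion $\nu\norma{\dr g_k}{}^2$ and the Bessel potential $\nu k^2\norma{g_k/r}{}^2$. At the slowest mode $|k|=1$, equating the shear-diffusion rate $(\nu\,r^{2(\pp-1)})^{1/3}$ with the local diffusive rate $\nu/r^2$ localizes the critical structure at scale $r_*\sim\nu^{1/(\pp+2)}$ and produces the rate $\nu/r_*^2\sim\nu^{\pp/(\pp+2)}$, matching \eqref{eq:rate}. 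The hard part is to make this balance rigorous uniformly over all $k$ and over the whole half-line, where $m'$ is simultaneously degenerate at $r=0$ and unbounded as $r\to\infty$ and the potential $k^2/r^2$ is singular at the origin: one must choose $\alpha,\beta$ as the correct powers of $\nu$ (and $k$) attached to the scale $r_*$ and close all commutator errors without losing powers of $\nu$. The logarithmic correction $1+\tfrac{2(\pp-1)}{\pp+2}|\ln\nu|$ is the sharp price of controlling the weight $r^{2(\pp-1)}$ by the base norm across the range of active scales $r\in(r_*,1)$: it is precisely $2(\pp-1)\,|\ln r_*|=\tfrac{2(\pp-1)}{\pp+2}|\ln\nu|$, vanishing exactly when $\pp=1$, and tracking it sharply is the delicate point of the argument.
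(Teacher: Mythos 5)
Your treatment of \eqref{eq:heatdecay} is fine and matches the paper (which simply invokes the Green's function of the radial heat equation), and your augmented functional for the fluctuations is the same hypocoercivity device the paper uses in its Theorem \ref{th:enhanced}: your cross term $\mathrm{Im}\scalar{m'g_k}{\dr g_k}$ is exactly the paper's $\pp\scalar{r^{\pp-1}\dtheta f_k}{\dr f_k}$ on a frequency band. The genuine gap is in the final step, which you dispose of in one sentence (``summing over $k$ and using Plancherel then returns \eqref{eq:L2est}''). Your claimed equivalence $\Phi_k\simeq \norma{g_k}{}^2+\norma{r^{\pp-1}g_k}{}^2$ cannot hold: $\Phi_k$ contains $\alpha\norma{\dr g_k}{}^2$ with $\alpha>0$ (this term is indispensable to absorb the commutator errors), and a datum with finite $X$-norm can have infinite gradient norm, so $\Phi_k(0)$ is \emph{not} controlled by $\norma{f^{in}_k}{X}^2$. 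Consequently the differential inequality for $\Phi_k$ does not, by itself, produce \eqref{eq:L2est} with only the $X$-norm of the datum on the right-hand side. This is precisely why the paper has a separate ``reconstruction of the $X$-norm'' step (Proposition \ref{pr:Wkappa}): a short-time control of $W_k(t)=\frac12\norma{f_k}{}^2+\frac{\cc_0}{4}\norma{r^{\pp-1}f_k}{}^2$ by energy balances, and, for $t\geq T_{\nu,k,\ln}$, a mean-value-theorem argument on $\nu\int_0^{T_{\nu,k}}\norma{\nabla f_k}{}^2\dd\tau\leq\frac12\norma{f_k^{in}}{}^2$ producing a time $t^*\leq T_{\nu,k}$ at which $\aa\norma{\nabla f_k(t^*)}{}^2\lesssim\norma{f_k^{in}}{}^2$, so that the functional inequality can be started at $t^*$. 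The price of this step is the factor $\cc k^2\sim(k/\nu)^{2(\pp-1)/(\pp+2)}$, and converting that factor into a shift of the exponential is exactly where the logarithm in \eqref{eq:rate} comes from. Your heuristic for the log (matching $2(\pp-1)|\ln r_*|$ across active scales) reproduces the right numerology but is not the mechanism of any proof you have outlined; without the reconstruction step your argument proves decay of a functional that the hypothesis $\norma{f^{in}}{X}<\infty$ does not make finite.

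There is a second, quantitative problem with your specific ansatz: you fix the coefficient of $\norma{r^{\pp-1}g_k}{}^2$ to be of order one (so as to be comparable with the $X$-norm), whereas the paper's constraint structure (the analogues of \eqref{restrictionab}, \eqref{restrictionabc}, \eqref{restrictionbc}) forces the weighted term to carry the large coefficient $\cc\sim\nu^{-2(\pp-1)/(\pp+2)}$ when $\pp>1$. Tracing your constraints with an order-one weight, the cross coefficient is limited to $\beta\lesssim\nu^{1/3}$, which closes the scheme only at the rate $\nu^{(3\pp-2)/(3\pp)}$; for every $\pp>1$ this is strictly worse than $\nu^{\pp/(\pp+2)}$, and worse than $\lambda_\nu$ even with its logarithmic loss. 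So the tension between ``functional equivalent to the $X$-norm'' and ``functional decaying at the enhanced rate'' is not a technicality one can tune away: the paper resolves it by proving the sharp rate for a non-equivalent functional \eqref{eq:PHIk} and then paying a logarithm in the reconstruction, and some version of that two-tier argument is what your proposal is missing.
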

	
Theorem \ref{th:mainth} is a quantification of the  \emph{shear-diffuse mechanism} that has been studied extensively in
the physics literature \cites{BajerEtAl01,DubrulleNazarenko94,RhinesYoung83,LatiniBernoff01}. Rigorous mathematical
results have started to appear only in recent times \cites{WEI18,Deng2013,BGM15III,CZDE18, CZEW19,CKRZ08,BCZGH15,WZ19,BCZ15,BMV14,BVW16,BGM15I,LiWeiZhang2017,BW13,Gallay2017,IMM17,WZZkolmo17,BGM15II,WZ18,LWZ18,GNRS18}, and the field has quickly attracted enormous interest. In general, the main difficulty is to \emph{quantify} the separation of time-scales that happens between
the evolution of $\theta$-independent mode and the others. The cause of this behavior is the interaction between transport and diffusion: roughly speaking, advection causes an energy cascade towards small spatial scales, at which dissipation kicks in and is more efficient.

In the radial case studied in this paper, the picture is quite intuitive and simple to describe: if $\nu\ll 1$, mixing  \emph{along} streamlines is most efficient 
on a time-scale between $1/\lambda_\nu$ and the classical diffusive one proportional to $1/\nu$. At these times, \eqref{eq:L2est} becomes arbitrarily small as $\nu\to0$ and the passive scalar tends to relax to 
a $\theta$-independent state, represented by the average $\l f\r_\theta$, which remains order 1 by \eqref{eq:heatdecay}. 
Then, diffusion takes over and dissipation happens mainly \emph{across} streamlines (see Figure \ref{fig:coffee3} below). 
From a quantitative standpoint, the enhanced dissipation mechanism is most efficient if the background flow is not ``too flat'' at the origin.
In fact, this is the same that happens in the case of shear flows, in which the decay rate is determined exclusively by the flatness of the critical
points (see \cite{BCZ15}). The presence of the log-correction in \eqref{eq:rate} is probably an artifact of the proof (notice that this contribution is no present
at $\pp=1$), but the decay rates are otherwise sharp (see also \cite{MD18} for numerical evidence).

\begin{figure}[h!]
  \centering
  \begin{subfigure}[b]{0.24\linewidth}
    \includegraphics[width=\linewidth]{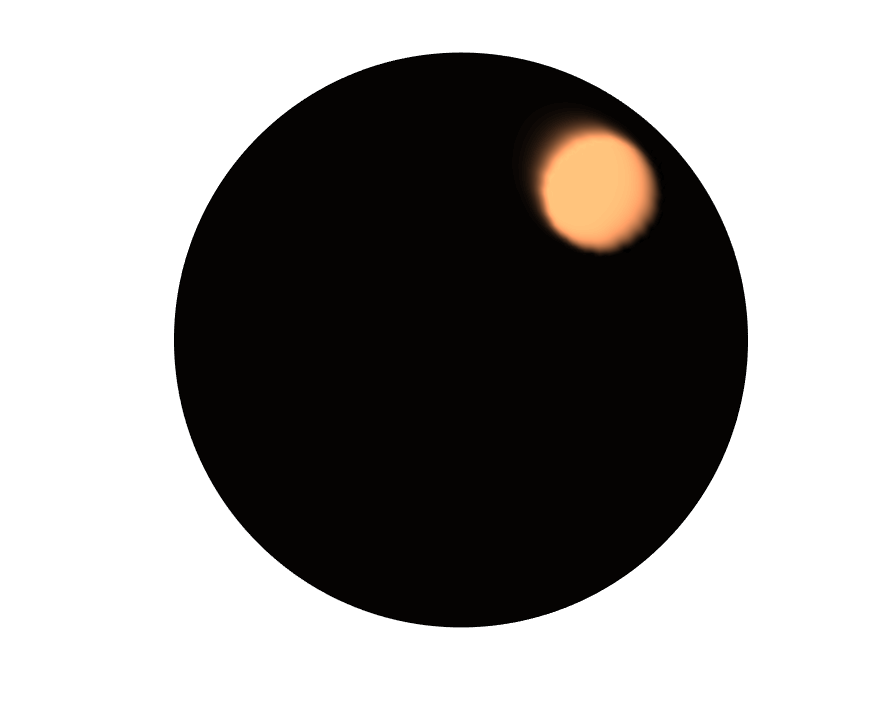}
%     \caption{Coffee.}
  \end{subfigure}
  \begin{subfigure}[b]{0.24\linewidth}
    \includegraphics[width=\linewidth]{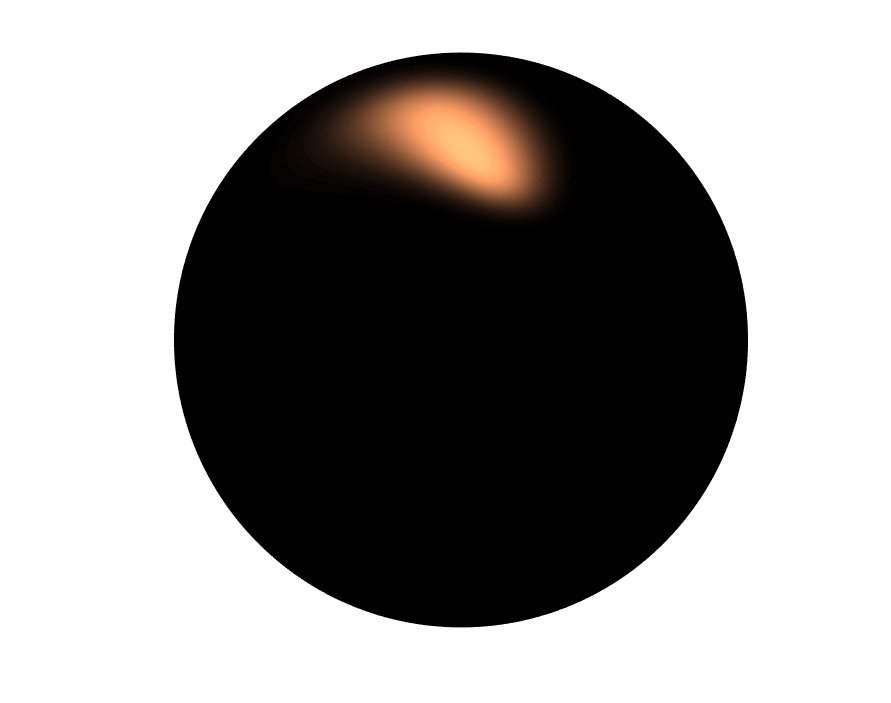}
%    \caption{More coffee.}
  \end{subfigure}
  \begin{subfigure}[b]{0.24\linewidth}
    \includegraphics[width=\linewidth]{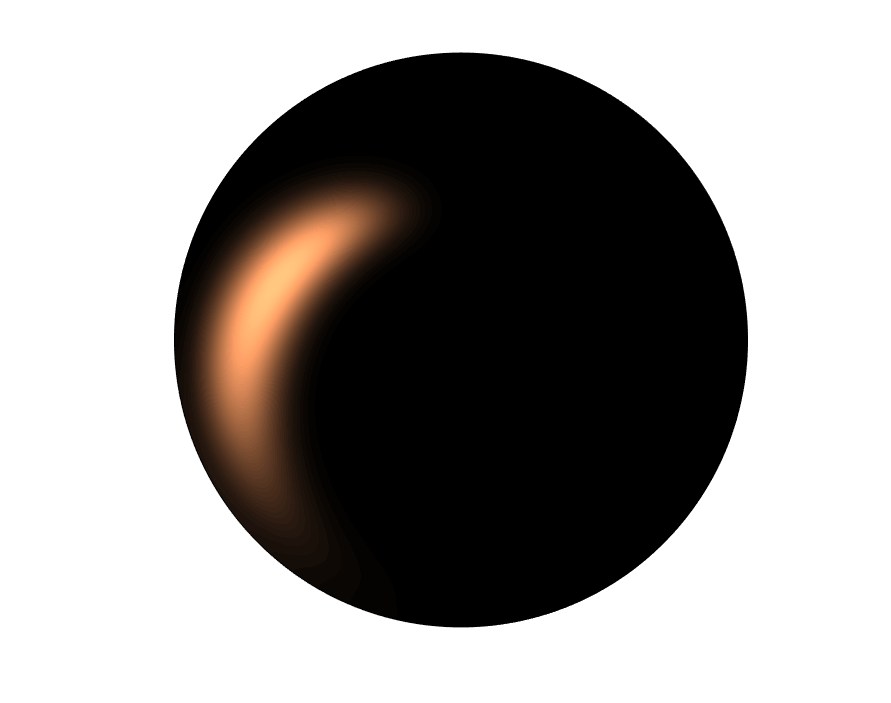}
%    \caption{Tasty coffee.}
  \end{subfigure}
  \begin{subfigure}[b]{0.24\linewidth}
    \includegraphics[width=\linewidth]{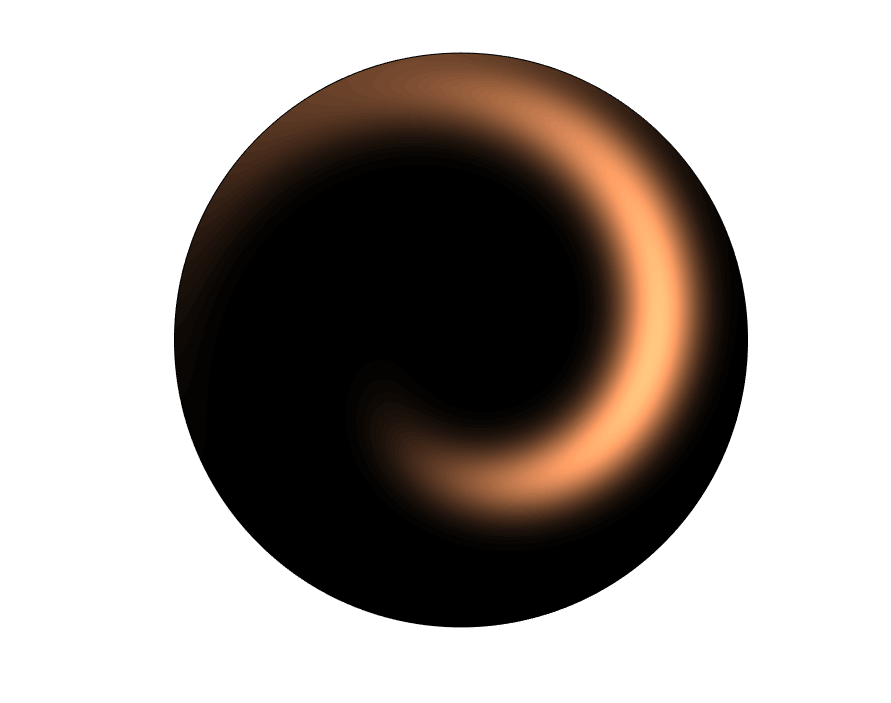}
%    \caption{Too much coffee.}
  \end{subfigure}
    \begin{subfigure}[b]{0.24\linewidth}
    \includegraphics[width=\linewidth]{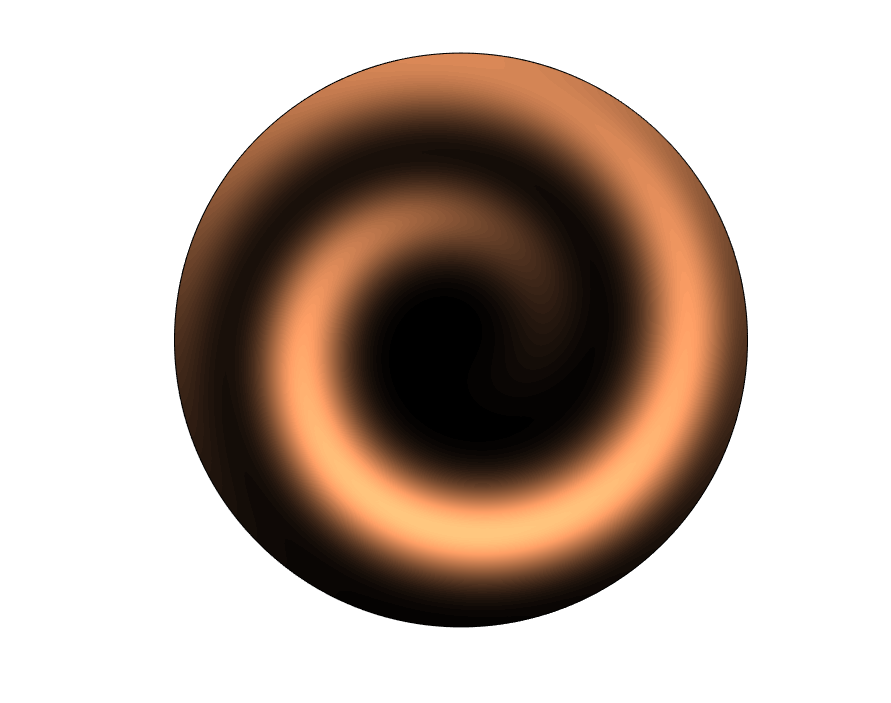}
%     \caption{Coffee.}
  \end{subfigure}
  \begin{subfigure}[b]{0.24\linewidth}
    \includegraphics[width=\linewidth]{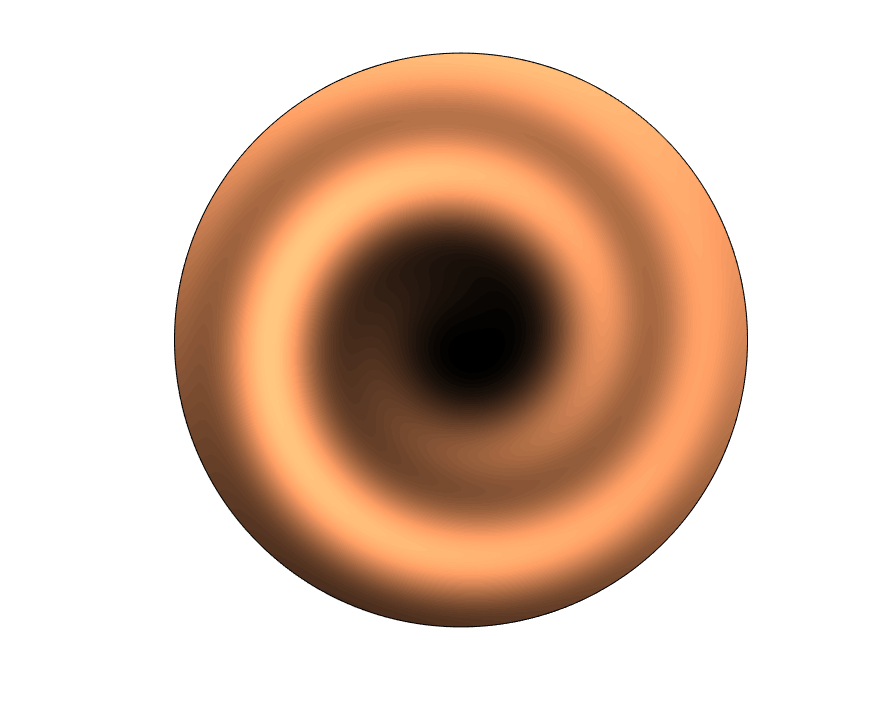}
%    \caption{More coffee.}
  \end{subfigure}
  \begin{subfigure}[b]{0.24\linewidth}
    \includegraphics[width=\linewidth]{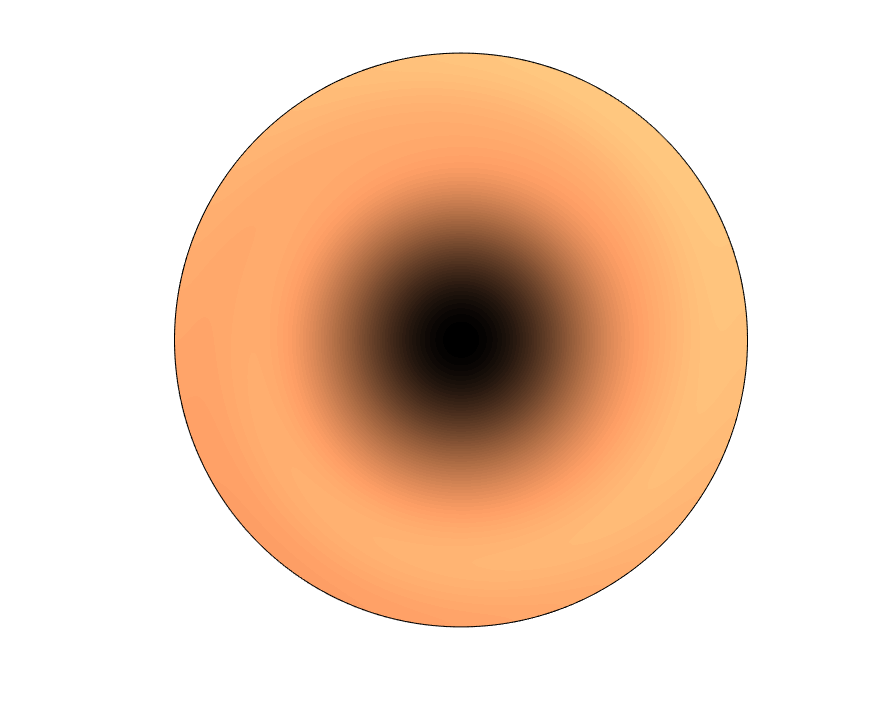}
%    \caption{Tasty coffee.}
  \end{subfigure}
  \begin{subfigure}[b]{0.24\linewidth}
    \includegraphics[width=\linewidth]{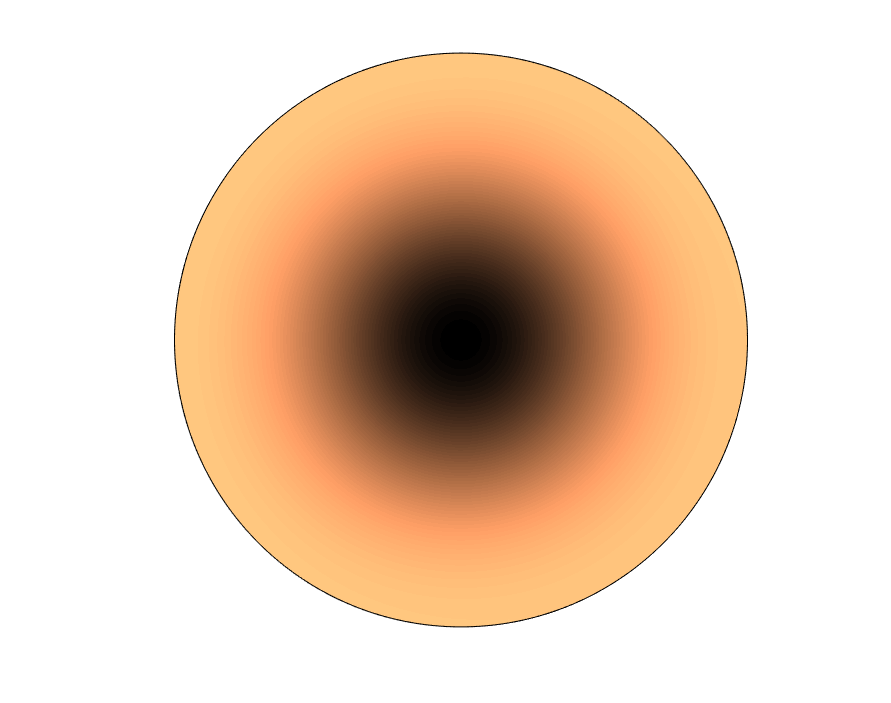}
%    \caption{Too much coffee.}
  \end{subfigure}
  \caption{The evolution of an initial concentration subject radial stirring and small diffusion (think of milk stirred in a coffee mug!). At the beginning,
  advection is the main effect, while at the end the solution has reached radial symmetry and only undergoes diffusion.}
  \label{fig:coffee3}
\end{figure}

The present paper is the first one of its kind to treat the question of enhanced dissipation in the whole space $\R^2$. There are two main 
difficulties here: on the one hand, we do not have a Poincar\'e inequality, so that exponential decay estimates are, generically speaking, 
far from being trivial. On the other hand, the possibility of growth of the solution at infinity forces us to work in weighted spaces (hence
the $X$-norm defined in \eqref{eq:Xnorm}), adding a few technicalities in closing the estimates. 
The proof of Theorem \ref{th:mainth} relies on ideas originated in kinetic theory to study the long-time behavior of collisional models
\cites{DMS15,DesvillettesVillani01, HerauNier2004,Herau2007}, and is based on a technique known as hypocoercivity \cite{Villani09}. In
fluid dynamics problems, significant additional care is required to treat the infinite P\'eclet number limit $\nu\to 0$ (see \cites{CLS17,CZDE18}
for the mixing properties at $\nu=0$). 

Combining the ideas of this article and the those of \cite{BCZ15}, it is possible to treat the case of more general radial flows,
where $r^\pp$ in \eqref{eq:cauchycirc} is replaced by an arbitrary smooth function $u(r)$, and the case of a bounded domain (a disk), by imposing
suitable no-flux boundary conditions on $f$. In this case, the weights in the $X$-norm in \eqref{eq:Xnorm} become redundant, but no
substantial change in the rate \eqref{eq:rate} is expected, except possibly in the case $\pp=1$ (see \cite{BCZ15}).

\subsection{Outline of the article}
The next Section \ref{sec:enbal} is devoted to the derivation of several energy balances that will be crucial in order to set up the hypocoercivity 
scheme in Section \ref{sec:hypo}: this is where a Fourier-localized version of Theorem \ref{th:mainth} is proven. The proof is based
on two fundamental lemmas, that hold in general for radial functions, that are included at the end of the article in the Appendix \ref{app:ineq}.

\subsection{Notations and conventions}
In what follows, we will use $\|\cdot\|$ and $\l\cdot,\cdot\r$ for the standard real $L^2$ norm and scalar product, respectively
defined as
\begin{align}
\l g,\tilde{g}\r =\int_0^\infty\int_\T g(r,\theta) \tilde{g}(r,\theta) r\dd r\dd\theta, \qquad \|g\|^2=\int_0^\infty\int_\T |g(r,\theta)|^2 r\dd r\dd\theta.
\end{align}
Given $g\in L^2$, we can expand it in Fourier series in the angular $\theta$ variable, namely
\begin{align}
g(r,\theta)=\sum_{\ell\in \ZZ} \mathfrak{g}_\ell(r)\e^{i\ell\theta}, \qquad \mathfrak{g}_\ell(r)=\frac{1}{2\pi}\int_0^{2\pi}g(r,\theta)\e^{-i\ell\theta}\dd \theta.
\end{align} 
We will often make use of functions that are localized on a single band in $\theta$-frequency. Thus, for $k\in\N_0$ we set
\begin{equation}\label{eq:band}
 g_k(r,\theta):=\sum_{|\ell|=k} \mathfrak{g}_\ell(r)\e^{i\ell \theta}.
\end{equation}
This way we may write 
\begin{align}
g(r,\theta)=\sum_{k\in\N_0}g_k(r,\theta),
\end{align} 
as a sum of \emph{real-valued} functions $g_k$ that are localized in $\theta$-frequency on a single band $\pm k$, $k\in\N_0$. In particular,
for the $\theta$-average of a function $g$, we have $\l g\r_\theta=g_0=\mathfrak{g}_0$.

We will not distinguish between the two dimensional $L^2(r\dd r\dd\theta)$  and the one dimensional $L^2(r\dd r)$ spaces, as no dimensional property will be used. Notice that for $\theta$-independent functions, the norms only differ by a constant.

\subsection{Differential operators} In polar coordinates, the gradient and Laplace operators become
\begin{align}
\nabla=\begin{pmatrix}
	\dr \\ \frac{1}{r}\dtheta
	\end{pmatrix}, \qquad \Delta=\drr+\frac{1}{r} \dr +\frac{1}{r^2}\dthetatheta,
	\end{align}
respectively. Whenever we apply them to a function localized in one frequency band $k\in\N_0$ (see \eqref{eq:band}), 
we will often tacitly use that
\begin{align}\label{eqnormink}
\|\nabla g_k\|^2= \|\dr g_k\|^2+ k^2 \left\|\frac{g_k}{r}\right\|^2.
\end{align}

\section{Energy balances}\label{sec:enbal}
In this section, we derive several energy balances for \eqref{eq:cauchycirc}, that will prove useful in Section \ref{sec:hypo} to set up
the hypocoercivity scheme needed to prove Theorem \ref{th:mainth}.

\begin{proposition}\label{propen}
Let $f$ be a smooth solution of \eqref{eq:cauchycirc}. Then there hold the energy balances
	\begin{align}
	\label{dtf}
\frac{1}{2}&\Dt \norma{f}{}^2+\nu \norma{\nabla f}{}^2=0,\\
\label{dtnablaf}\frac{1}{2}&\Dt \Norma{\nabla f}{}^2+\nu\Norma{\Delta f}{}^2=-\pp\Scalar{r^{\pp-1}\dtheta f}{\dr f},\\
\label{dtscalar} 
&\Dt \Scalar{r^{\pp-1}\dtheta f}{\dr f}+\pp\Norma{r^{\pp-1}\dtheta f}{}^2=-2\nu \Scalar{r^{\pp-1}\dr \dtheta f}{\Delta f}-\nu\pp\Scalar{r^{\pp-2}\dtheta f}{\Delta f},\\
\label{rdtheta}\frac{1}{2}&\Dt \Norma{r^{\pp-1} \dtheta f}{}^2+\nu \norma{r^{\pp-1}\dtheta \nabla f}{}^2=2\nu (\pp-1)^2\Norma{r^{\pp-2}\dtheta f}{}^2.
\end{align}
\end{proposition}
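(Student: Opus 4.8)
The plan is to test the equation \eqref{eq:cauchycirc}, written as $\dt f=\nu\Delta f-r^{\pp}\dtheta f$, against suitable multipliers, exploiting two structural facts throughout. First, the transport operator $T:=r^{\pp}\dtheta$ is skew-symmetric in $L^2(r\dd r\dd\theta)$: since $r^{\pp}$ is $\theta$-independent and $\theta$ is periodic, $\Scalar{Tg}{g}=0$ for every $g$, and more generally such pairings reduce to $\theta$-integrals of $\dtheta(\cdot)$, which vanish. Second, $\Delta$ is self-adjoint with $\Scalar{\Delta g}{h}=-\Scalar{\nabla g}{\nabla h}$. The identity \eqref{dtf} is then immediate: pairing the equation with $f$ kills the transport term by skew-symmetry, while $\nu\Scalar{\Delta f}{f}=-\nu\Norma{\nabla f}{}^2$. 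Throughout I assume enough decay of the smooth solution to discard all boundary contributions, a point I address at the end.

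For \eqref{dtnablaf} I would compute $\tfrac12\Dt\Norma{\nabla f}{}^2=\Scalar{\nabla\dt f}{\nabla f}=-\Scalar{\dt f}{\Delta f}$, substitute the equation, and reduce to the single transport term $\Scalar{Tf}{\Delta f}$, where $T=r^{\pp}\dtheta$. Combining self-adjointness of $\Delta$ and skew-symmetry of $T$ gives $\Scalar{Tf}{\Delta f}=\tfrac12\Scalar{[\Delta,T]f}{f}$, so the key is the commutator. Since $\dtheta$ commutes with $\Delta$, one has $[\Delta,r^{\pp}\dtheta]=[\Delta,r^{\pp}]\dtheta$, and a direct computation yields $[\Delta,r^{\pp}]=\pp^2 r^{\pp-2}+2\pp r^{\pp-1}\dr$. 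Pairing with $f$, the zeroth-order piece contributes $\tfrac{\pp^2}{2}\Scalar{r^{\pp-2}\dtheta f}{f}$, which vanishes as the $\theta$-integral of $\dtheta(f^2)$, while the first-order piece gives $-\pp\Scalar{r^{\pp-1}\dtheta f}{\dr f}$ after one integration by parts in $\theta$. This is exactly the right-hand side of \eqref{dtnablaf}.

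For \eqref{dtscalar} I would differentiate the cross term $\Phi:=\Scalar{r^{\pp-1}\dtheta f}{\dr f}$ in time and substitute the equation in both slots. Applying $\dtheta$ and $\dr$ to $r^{\pp}\dtheta f$ produces the expected term $-\pp\Norma{r^{\pp-1}\dtheta f}{}^2$ together with the two remaining transport pieces $\Scalar{r^{2\pp-1}\dthetatheta f}{\dr f}$ and $\Scalar{r^{2\pp-1}\dtheta f}{\dr\dtheta f}$, which cancel against each other after a single integration by parts in $\theta$. The two diffusion terms $\nu\Scalar{r^{\pp-1}\dtheta\Delta f}{\dr f}$ and $\nu\Scalar{r^{\pp-1}\dtheta f}{\dr\Delta f}$ are then recast against $\Delta f$ by integrating by parts in $\theta$ (resp. in $r$): the first becomes $-\nu\Scalar{r^{\pp-1}\dr\dtheta f}{\Delta f}$, and the second becomes $-\nu\Scalar{r^{\pp-1}\dr\dtheta f}{\Delta f}-\nu\pp\Scalar{r^{\pp-2}\dtheta f}{\Delta f}$, giving \eqref{dtscalar}.

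Finally, for \eqref{rdtheta} I would use that $\dtheta$ commutes with the whole evolution, so $h:=\dtheta f$ solves the same equation \eqref{eq:cauchycirc}. Testing against the weighted multiplier $r^{2(\pp-1)}h$, the transport term vanishes (again a $\theta$-integral of $\dtheta(h^2)$), leaving $\nu\Scalar{r^{2(\pp-1)}h}{\Delta h}$. Writing this as $-\nu\Scalar{\nabla(r^{2(\pp-1)}h)}{\nabla h}$ splits it into $-\nu\Norma{r^{\pp-1}\nabla h}{}^2$ and the weight term $-2\nu(\pp-1)\Scalar{r^{2\pp-3}h}{\dr h}$; a further integration by parts in $r$ applied to $\tfrac12\dr(h^2)$ converts the latter into $+2\nu(\pp-1)^2\Norma{r^{\pp-2}h}{}^2$. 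Since $\nabla h=\dtheta\nabla f$, this is precisely \eqref{rdtheta}. The main obstacle in all four identities is purely technical: justifying that each integration by parts leaves no boundary term. Decay of the smooth solution handles the limit $r\to\infty$, while at the origin the weights $r^{\pp-2}$ and $r^{2\pp-3}$ are the delicate ones for $\pp<2$; here one checks that the boundary expressions carry enough positive powers of $r$, together with the regularity of $f$ at $r=0$, to vanish. Once these are discarded, the four balances follow from the algebraic computations above.
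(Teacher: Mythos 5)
Your proof is correct and follows essentially the same route as the paper: all four balances come from the same energy pairings, integration by parts in $r$ and $\theta$, and the skew-symmetry of the transport operator $r^{\pp}\dtheta$, with boundary terms discarded by decay and regularity at the origin. The only cosmetic difference is that you package the key step for \eqref{dtnablaf} as the commutator identity $\Scalar{Tf}{\Delta f}=\tfrac12\Scalar{[\Delta,T]f}{f}$ with $[\Delta,r^{\pp}]=\pp^2 r^{\pp-2}+2\pp r^{\pp-1}\dr$, whereas the paper expands $\Scalar{\nabla(r^{\pp}\dtheta f)}{\nabla f}$ and invokes antisymmetry directly---the two computations coincide term by term.
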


\begin{remark}
Notice that the most singular term in the origin is $\norma{r^{\pp-2}f_k}{}$ for $\pp \in(1,2)$. In view of \eqref{eq:fourexp}, if a $f$ is smooth we have that
\begin{align}
f_k(r,\theta)\sim r^k,\qquad \text{as } r\to 0,
\end{align}
making the term finite.
\end{remark}
\begin{proof}[Proof of Proposition \ref{propen}]
In the proof, we will use several times the antisymmetry of the the transport operator, namely that
\begin{align}\label{eq:anti}
\l r^\pp \dtheta f,g\r=-\l f,r^\pp \dtheta g\r,
\end{align}
for every $f,g$ sufficiently regular.

The equality \eqref{dtf} follows by multiplying \eqref{eq:cauchycirc} by $f$, integrating by parts and using \eqref{eq:anti}. To prove \eqref{dtnablaf}, multiply \eqref{eq:cauchycirc} by $r\Delta f $ and integrate to get
\begin{align}
	\Dt \scalar{f}{\Delta f}+\scalar{r^\pp \dtheta f}{\Delta f}= \nu \norma{\Delta f}{}^2,
\end{align} 
so that integrating by parts we have 
\begin{align}
	\frac{1}{2}\Dt\norma{\nabla f}{}^2+\Scalar{\nabla \big(r^\pp \dtheta f\big)}{\nabla f}+\nu\norma{\Delta f}{}^2=0.
\end{align}
	Then just observe that by \eqref{eq:anti} we have
	\begin{align}
	\Scalar{\nabla \big(r^\pp \dtheta f\big)}{\nabla f}=\Scalar{r^\pp \dtheta \nabla f}{\nabla f}+\pp \scalar{r^{\pp- 1}\dtheta f}{\dr f}=\pp \scalar{r^{\pp- 1}\dtheta f}{\dr f},
	\end{align}
so that \eqref{dtnablaf} follows.

We now turn to \eqref{dtscalar}. Using \eqref{eq:cauchycirc} we get
\begin{align}
\Dt \Scalar{\dtheta f}{r^{\pp-1}\dr f}=&\nu \left[ \Scalar{\dtheta \Delta f}{r^{\pp-1}\dr f}+\Scalar{\dtheta f}{r^{\pp-1}\dr \Delta f}\right]\notag\\
&-\Scalar{r^{\pp}\dthetatheta f}{r^{\pp-1}\dr f}-\Scalar{\dtheta f}{r^{\pp-1}r^{\pp}\dr\dtheta f}-\pp \Scalar{r^{\pp-1}\dtheta f}{r^{\pp-1}\dtheta f}.
\end{align}
Integrating by parts the third term in the right-hand side above, we have
\begin{align}
-\Scalar{r^{\pp}\dthetatheta f}{r^{\pp-1}\dr f}=\Scalar{r^\pp\dtheta f}{r^{\pp-1}\dtheta\dr f},
\end{align}
which cancels with the fourth term. Moreover,
\begin{align}
\Scalar{\dtheta f}{r^{\pp-1}\dr \Delta f}&=\Scalar{r^\pp\dtheta f}{\frac{1}{r}\dr \Delta f}=-\Scalar{\frac{1}{r}\dr \big(r^\pp \dtheta f\big)}{\Delta f}\notag\\
&=-\Scalar{r^{\pp-1}\dr \dtheta f}{\Delta f}-\pp \Scalar{r^{\pp-2}\dtheta f}{\Delta f}.
\end{align}
So thanks to these computations we get that 
\begin{align}
\Dt \Scalar{\dtheta f}{r^{\pp-1}\dr f}+\pp\norma{r^{\pp-1}\dtheta f}{}^2=-2\nu \Scalar{r^{\pp-1}\dr \dtheta f}{\Delta f}-\nu\pp\Scalar{r^{\pp-2}\dtheta f}{\Delta f},
\end{align}
hence proving \eqref{dtscalar}.

Finally, to prove \eqref{rdtheta}, we use \eqref{eq:anti} and compute the following 
\begin{align}
\frac{1}{2}\Dt \Norma{r^{\pp-1}\dtheta f}{}^2&=\nu \Scalar{r^{\pp-1}\dtheta f}{r^{\pp-1}\dtheta \Delta f}-\Scalar{r^{\pp-1}\dtheta f}{r^{\pp-1}r^{\pp}\dthetatheta f}\notag\\
&=\nu \Scalar{r^{2(\pp-1)}\dtheta f}{\dtheta \Delta f}=-\nu\Scalar{\nabla\big(r^{2(\pp-1)}\dtheta f\big)}{\dtheta \nabla f}.
\end{align}
Then computing the last term and using once more \eqref{eq:anti},  we have
\begin{align}
-\nu\Scalar{\nabla\big(r^{2(\pp-1)} \dtheta f\big)}{\dtheta \nabla f}=-\nu\Norma{r^{\pp -1}\dtheta \nabla f}{}^2-2\nu(\pp-1)\Scalar{r^{2(\pp-1)}\dtheta f}{\frac{1}{r}\dr \dtheta f}.
\end{align}
 Rewrite the last scalar product of the previous equality as 
\begin{align}
\Scalar{r^{2(\pp-1)}\dtheta f}{\frac{1}{r}\dr \dtheta f}&=-\Scalar{\frac{1}{r}\dr \big(r^{2(\pp-1)}\dtheta f\big)}{\dtheta f}\notag\\
&=-\Scalar{\frac{1}{r}\dr \dtheta f}{r^{2(\pp-1)}\dtheta f}-2(\pp-1)\Scalar{\frac{1}{r}r^{2(\pp-1)-1}\dtheta f}{\dtheta f},
\end{align}
or equivalently we have that 
\begin{align}
\Scalar{r^{2(\pp-1)}\dtheta f}{\frac{1}{r}\dr \dtheta f}=-(\pp-1)\Norma{r^{\pp-2}\dtheta f}{}^2.
\end{align}
So putting everything together we get 
\begin{align}
\frac{1}{2}\Dt \Norma{r^{\pp-1} \dtheta f}{}^2=-\nu \norma{r^{\pp-1}\dtheta \nabla f}{}^2+2\nu (\pp-1)^2\Norma{r^{\pp-2}\dtheta f}{}^2,
\end{align}
proving \eqref{rdtheta}. The proof is over.
\end{proof}

\section{Hypocoercivity setting}\label{sec:hypo}
To prove Theorem \ref{th:mainth}, we proceed in several steps. Firstly, we decouple equation \eqref{eq:cauchycirc} in the various 
Fourier modes in $\theta$. Then we set up a proper energy functional that satisfies a suitable inequality that allows to prove an
enhanced decay at the rate \eqref{eq:rate} \emph{without} the log-correction. As this functional involves norms of higher derivatives, 
we then show how to pass to the $L^2$ estimate \eqref{eq:L2est} by giving up a log-correction in the rate. All the estimates that we perform in this section are done in a smooth setting. Then one recover the general case by a standard approximation argument and passing to the limit.

\subsection{The equation mode-by-mode}
By taking the Fourier transform in $\theta$ of \eqref{eq:cauchycirc}, we deduce that for each $\ell\in \ZZ$ we have
\begin{align}\label{eq:cauchyfourier}
\begin{cases}
\dt \mathfrak{f}_\ell+ i\ell r^{\pp} \mathfrak{f}_\ell =\nu\left( \drr +\frac1r \dr - \frac{\ell^2}{r^2}\right) \mathfrak{f}_\ell, \quad &\text{in } r\in [0,\infty), \ t\geq 0,\\
\mathfrak{f}_\ell |_{t=0}=\mathfrak{f}_\ell^{in}, \quad &\text{in } r\in [0,\infty).
\end{cases}
\end{align}
It is apparent that the equation decouples in the Fourier modes, and one can treat each equation separately. However, this implies a great deal
of notation due to the fact that the various $\mathfrak{f}_\ell$'s are complex functions. Instead, we prefer to study \eqref{eq:cauchycirc} with
an initial datum concentrated on a Fourier band $|\ell|=k\in \N_0$, since none of the estimates involved are the same for the mode $k$ and $-k$.
Hence, we will study, as explained in \eqref{eq:band}, the evolution of
\begin{equation}
f_k(t,r,\theta):=\sum_{|\ell|=k} \mathfrak{f}_\ell(t,r)\e^{i\ell \theta},
\end{equation}
which satisfies  \eqref{eq:cauchycirc} but also has nice property with respect to norms, such as \eqref{eqnormink}.

\begin{remark}[The zeroth mode]
As mentioned in the introduction, $f_0$ satisfies the diffusion equation \eqref{eq:thetaav}, which is precisely the two-dimensional heat equation
under radial symmetry assumptions. Estimate \eqref{eq:heatdecay} is therefore a classical one, which can be proven via the Green's function
of the heat operator (see e.g. \cite{GGS10}). 
\end{remark}

\subsection{The modified energy functional}
Throughout the section, we assume that $k\geq1$. 
The main result is the following theorem.

\begin{theorem}\label{th:enhanced}
Let $\pp\geq 1$.
There exists a constant $\eps_0\in (0,1)$  such that the following holds:
there exist positive numbers $\aa_0,\bb_0,\cc_0$ only depending on $\eps_0$
such that for each integer $k\geq 1$ and $\nu>0$ with
$\nu k^{-1}\leq 1$
 the energy functional
\begin{align}\label{eq:PHIk}
\Phi_k=\frac12\left[\norma{f_k}{}^2+\aa_0 \frac{\nu^{\frac{2}{\pp+2}}}{k^{\frac{2}{\pp+2}}}\norma{\nabla f_k}{}^2
+2\pp \bb_0 \frac{\nu^\frac{2-\pp}{\pp+2}}{k^\frac{4}{\pp+2}}\scalar{r^{\pp-1}\dtheta f_k}{\dr f_k}
+\cc_0 \frac{\nu^{\frac{-2(\pp-1)}{\pp+2}}}{k^{\frac{6}{\pp+2}}}\norma{r^{\pp-1}\dtheta f_k}{}^2 \right]
\end{align}
satisfies the differential inequality
\begin{align}\label{eq:diffPHI}
\Dt \Phi_k+2\varepsilon_0\nu^{\frac{\pp}{\pp+2}}k^{\frac{2}{\pp+2}}\Phi_k\leq 0,\end{align}
for all $t\geq 0$.
\end{theorem}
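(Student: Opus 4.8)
The plan is to run a weighted hypocoercivity estimate for the single band $f_k$. I apply the four balances of Proposition~\ref{propen} to $f_k$ (which solves \eqref{eq:cauchycirc} and satisfies \eqref{eqnormink}), differentiate $\Phi_k$, and split $\Dt\Phi_k$ into dissipative contributions and error terms. Write $T_0,T_1,T_2,T_3$ for the four summands of $2\Phi_k$, and $\Lambda:=\nu^{\pp/(\pp+2)}k^{2/(\pp+2)}$ for the target rate. The key design feature is that the $\nu,k$-weights in \eqref{eq:PHIk} are calibrated so that $\Lambda$ times each summand lines up with exactly one dissipative term: $\Lambda$ times the weight of $T_1$ equals $\aa_0\nu$, to be matched by the $-\nu\|\nabla f_k\|^2$ coming from \eqref{dtf}; and $\Lambda$ times the weight of $T_3$ equals $\cc_0\nu^{(2-\pp)/(\pp+2)}k^{-4/(\pp+2)}$, to be matched by the coercive term $\pp^2\bb_0\nu^{(2-\pp)/(\pp+2)}k^{-4/(\pp+2)}\|r^{\pp-1}\dtheta f_k\|^2$ extracted from \eqref{dtscalar}.

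The heart of the argument is coercivity of $T_0$. Differentiating $\tfrac12\|f_k\|^2$ produces $-\nu\|\nabla f_k\|^2$, while the cross term $T_2$ feeds on \eqref{dtscalar} to generate the transport dissipation $\propto\|r^{\pp-1}\dtheta f_k\|^2$. By \eqref{eqnormink} these control $\nu k^2\|f_k/r\|^2$ and $k^2\nu^{(2-\pp)/(\pp+2)}k^{-4/(\pp+2)}\|r^{\pp-1}f_k\|^2$, so minimizing the radial weight, $\min_{r>0}(\nu r^{-2}+c\,r^{2(\pp-1)})\simeq\nu^{(\pp-1)/\pp}$, the pointwise inequality for radial functions of Appendix~\ref{app:ineq} yields $\nu\|\nabla f_k\|^2+\tfrac{\nu^{(2-\pp)/(\pp+2)}}{k^{4/(\pp+2)}}\|r^{\pp-1}\dtheta f_k\|^2\gtrsim\Lambda\|f_k\|^2$, with the $\nu$-exponent landing exactly on $\pp/(\pp+2)$ and the $k$-exponent at least $2/(\pp+2)$ (equality only at $\pp=1$). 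This is precisely the step that converts diffusion-plus-transport into the enhanced rate $\Lambda$; note also that the hypothesis $\nu k^{-1}\le1$ guarantees $\Lambda\ge\nu$, so the enhanced rate is indeed the operative one.

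Next I dispose of the error terms and verify coercivity of $\Phi_k$. The $\nu$-couplings $\nu\scalar{r^{\pp-1}\dr\dtheta f_k}{\Delta f_k}$ and $\nu\pp\scalar{r^{\pp-2}\dtheta f_k}{\Delta f_k}$ from \eqref{dtscalar}, carried with the weight of $T_2$, are absorbed by Young's inequality into the diffusive terms $\nu\|\Delta f_k\|^2$ (from \eqref{dtnablaf}, carried by $\aa_0$) and $\nu\|r^{\pp-1}\dtheta\nabla f_k\|^2$ (from \eqref{rdtheta}, carried by $\cc_0$); the exponents again conspire, the square of the coupling weight being a $\nu,k$-independent multiple $\propto\bb_0^2/(\aa_0\cc_0)$ of the product of the two diffusive weights, so that the absorption is uniform in $\nu\le1$, $k\ge1$. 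Positivity and comparability of $\Phi_k$ follow from the same scaling coincidence, which gives $|T_2|\lesssim\tfrac{\bb_0}{\sqrt{\aa_0\cc_0}}\sqrt{T_1T_3}$, so that choosing $\bb_0$ small relative to $\sqrt{\aa_0\cc_0}$ renders $\Phi_k$ comparable to $\|f_k\|^2+T_1+T_3$. The constants are fixed in order: $\eps_0$ small, then $\aa_0,\cc_0$ to win the coercivity budget of the previous paragraph, and finally $\bb_0$.

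The main obstacle is the single genuinely positive term $2\nu(\pp-1)^2\|r^{\pp-2}\dtheta f_k\|^2$ on the right of \eqref{rdtheta}, whose weight $r^{\pp-2}$ is singular at the origin precisely for $\pp\in(1,2)$ (cf.\ the Remark). It must be absorbed into the lone radial dissipation $-\nu\|r^{\pp-1}\dtheta\nabla f_k\|^2$ produced by $T_3$. The naive bound $\|r^{\pp-1}\dtheta\nabla f_k\|^2\ge\big((\pp-1)^2+k^2\big)\|r^{\pp-2}\dtheta f_k\|^2$, obtained from the crude weighted Hardy inequality together with the angular term in \eqref{eqnormink}, suffices when $k\ge\pp-1$; but the low-frequency modes $1\le k<\pp-1$ are \emph{not} controlled this way and force the use of the sharp frequency-dependent Hardy inequality of Appendix~\ref{app:ineq}, which furnishes the control of $\|r^{\pp-2}\dtheta f_k\|$ uniformly in $k\ge1$ and $\nu\le1$. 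Granting this, collecting all dissipative terms against $2\eps_0\Lambda\Phi_k$ and invoking the coercivity of $\Phi_k$ yields \eqref{eq:diffPHI}.
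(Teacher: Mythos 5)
Your plan is, in outline, the paper's own proof: the same functional, the four balances of Proposition \ref{propen}, Young absorption of the two $\nu$-couplings from \eqref{dtscalar} into $\aa\nu\norma{\Delta f_k}{}^2$ and $\cc\nu\norma{r^{\pp-1}\dtheta \nabla f_k}{}^2$ (your scaling coincidence $(\bb\nu)^2\propto\frac{\bb_0^2}{\aa_0\cc_0}(\aa\nu)(\cc\nu)$ is exactly the paper's constraint \eqref{restrictionabc}), and your coercivity step for $\norma{f_k}{}^2$ via minimizing $\nu r^{-2}+c\,r^{2(\pp-1)}$ is precisely Lemma \ref{lemma:spectralgap}, with the same (correct) exponent bookkeeping.

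The genuine gap is in your final paragraph, i.e.\ the term $2\cc\nu(\pp-1)^2\norma{r^{\pp-2}\dtheta f_k}{}^2$ from \eqref{rdtheta}, which the paper also identifies as the crux. Your claim that it ``must be absorbed into the radial dissipation $\cc\nu\norma{r^{\pp-1}\dtheta\nabla f_k}{}^2$'' cannot be implemented at low frequencies: since the measure is $r\dd r$, the relevant weighted Hardy inequality $\norma{r^{\pp-1}\dr h}{}^2\geq(\pp-1)^2\norma{r^{\pp-2}h}{}^2$ has sharp constant $(\pp-1)^2$, so $\norma{r^{\pp-1}\dtheta\nabla f_k}{}^2\geq\big((\pp-1)^2+k^2\big)\norma{r^{\pp-2}\dtheta f_k}{}^2$ is optimal, and the required factor $2(\pp-1)^2$ is unreachable whenever $k<\pp-1$ --- as you yourself observe. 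Your fallback cites the right lemma but misreads what it provides: Lemma \ref{lemma:ralpha-2} is not a sharpened, frequency-dependent Hardy inequality with $\norma{r^{\pp-1}\dtheta\nabla f_k}{}$ on the right; it bounds $\sigma^{1/\pp}\norma{r^{\pp-2}g_k}{}^2$ by $\sigma\norma{\nabla g_k}{}^2+\norma{r^{\pp-1}g_k}{}^2$. The paper applies it at band $k$ with $\sigma=\nu/(2\bb\pp^2k^2)$, so the singular term is absorbed into $\tfrac{\nu}{8}\norma{\nabla f_k}{}^2+\tfrac{\bb\pp^2}{4}\norma{r^{\pp-1}\dtheta f_k}{}^2$ --- the gradient dissipation and the transport coercivity, the same two terms you spend on the spectral gap --- uniformly in $k\geq1$ and with no high/low frequency dichotomy. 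This absorption needs the constraint $2\cc\nu(\pp-1)^2\lesssim(\nu/k^2)^{1/\pp}\bb^{(\pp-1)/\pp}$ (the paper's \eqref{restrictionbc}), which under \eqref{chooseabc} is scale-invariant and reduces to a condition on $\cc_0,\bb_0$ alone; together with \eqref{restrictionabc} and with \eqref{restrictionab} (the latter needed to absorb the error $\aa\pp\scalar{r^{\pp-1}\dtheta f_k}{\dr f_k}$ from \eqref{dtnablaf}, which your sketch never treats), it is exactly what forces the exponents in \eqref{eq:PHIk}. As written, your argument does not close for $1\leq k<\pp-1$; the repair is to run Lemma \ref{lemma:ralpha-2} the way the paper does, after which the case split becomes unnecessary.
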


The main Theorem \ref{th:mainth} follows as a consequence of Theorem \ref{th:enhanced}. In particular, we prove Theorem \ref{th:mainth} in Section \ref{sec:recnorm}.

In order to prove Theorem \ref{th:enhanced}, we need the two crucial Lemmas \ref{lemma:spectralgap} and  \ref{lemma:ralpha-2}, which we state
in greater generality in the Appendix \ref{app:ineq}. We now proceed
with the proof of the main theorem of this section.
\begin{proof}[Proof of Theorem \ref{th:enhanced}]
	For simplicity of notation we will always omit the subscript $k$ throughout all the proof. Also, in the course of this proof, $c_1, c_2$ and $c_3$ will denote specific constants, depending on $\pp$ and independent of $\nu,k, \alpha_0,\beta_0,\gamma_0$. Define the following energy functional
\begin{equation}
\Phi=\frac{1}{2}\left[\norma{f}{}^2+\aa\norma{\nabla f}{}^2+2\pp \bb\scalar{r^{\pp-1}\dtheta f}{\dr f}+\cc\norma{r^{\pp-1}\dtheta f}{}^2\right],
\end{equation}
where $\aa,\bb,\cc$ will be chosen later, complying with several constraints that will appear during the proof. First of all, observe that 
\begin{equation*}
2\pp \bb|\scalar{r^{\pp-1}\dtheta f }{\dr f}|\leq 2\pp \bb \norma{r^{\pp-1}\dtheta f}{}\norma{ \dr f}{}\leq \frac{\aa}{2}\norma{\nabla f}{}^2+\frac{2\pp^2\bb^2}{\aa}\norma{r^{\pp-1}\dtheta f}{}^2.
\end{equation*}
As a first condition on the coefficients $\aa, \bb, \cc$, we require that
\begin{equation}
\label{b2/ac}
\frac{\bb^2}{\aa \cc}\leq \frac{1}{4\pp^2}.
\end{equation}
In this way, \eqref{b2/ac} guarantees that 
\begin{equation}
\label{equivalencephi}
\frac{1}{4}\left(2\norma{f}{}^2+\aa\norma{\nabla f}{}^2+\cc\norma{r^{\pp-1}\dtheta f}{}^2\right)\leq \Phi \leq \frac{1}{4}\left(2\norma{f}{}^2+3\aa\norma{\nabla f}{}^2+3\cc\norma{r^{\pp-1}\dtheta f}{}^2\right).
\end{equation}
Then, thanks to the energy balances given in Proposition \ref{propen}, we have that 
\begin{equation}
\label{eq:dtphi}\begin{split}
\Dt \Phi + &\nu \norma{\nabla f}{}^2+\aa \nu \norma{\Delta f}{}^2+\bb\pp^2 \norma{r^{\pp-1} \dtheta f}{}^2+\cc\nu \norma{r^{\pp-1}\dtheta \nabla f}{}^2\\
=&-\aa\pp\Scalar{r^{\pp-1}\dtheta f}{\dr f}-2\bb \pp \nu \Scalar{r^{\pp-1}\dr \dtheta f}{\Delta f}\\
&-\bb\pp^2\nu \Scalar{r^{\pp-2}\dtheta f}{\Delta f}+2\cc\nu(\pp-1)^2\norma{r^{\pp-2}\dtheta f}{}^2.
\end{split}
\end{equation}
Now we need to estimate the terms on the right-hand side of \eqref{eq:dtphi}. We control the scalar product terms just by Cauchy-Schwartz inequality. In particular, we have that
\begin{equation}
\label{bd:ascalar}
\aa\pp \Scalar{r^{\pp-1}\dtheta f}{\dr f}\leq \frac{\nu}{4}\norma{\nabla f}{}^2+\frac{\aa^2\pp^2}{\nu}\norma{r^{\pp-1}\dtheta f}{}^2,
\end{equation}
and 
\begin{equation}
\label{bd:bscalar1}
2\bb\pp\nu \Scalar{r^{\pp-1}\dr \dtheta f}{\Delta f} \leq \frac{\aa \nu}{4}\norma{\Delta f}{}^2+\frac{4\bb^2\pp^2\nu}{\aa}\norma{r^{\pp-1}\dtheta \nabla f}{}^2.
\end{equation}
Moreover,
\begin{equation}
\label{bd:bscalar2}
\begin{split}
\bb \pp^2\nu \Scalar{r^{\pp-2}\dtheta f}{\Delta f}&\leq \frac{\aa \nu}{4}\norma{\Delta f}{}^2+\frac{\bb^2 \pp^4 \nu}{\aa}\norma{r^{\pp-2}\dtheta f}{}^2\\
&\leq \frac{\aa \nu}{4}\norma{\Delta f}{}^2+\frac{\bb^2 \pp^4 \nu}{\aa}\norma{r^{\pp-1}\nabla f}{}^2,
\end{split} 
\end{equation}
where the last inequality  follows since $\nabla=(\dr, \dtheta/r)$. Now assume that 
\begin{equation}
\label{restrictionab}
\frac{\aa^2}{\bb}\leq \frac{\nu}{4},
\end{equation}
in order to absorb the last term of \eqref{bd:ascalar} on the l.h.s. of \eqref{eq:dtphi}.
So thanks to \eqref{bd:ascalar}, \eqref{bd:bscalar1} and \eqref{bd:bscalar2}, we obtain that 
\begin{equation}
 \label{bd:dtphirest}\begin{split}
 \Dt \Phi +&\frac{3\nu}{4}\norma{\nabla f}{}^2+\frac{\aa\nu}{2}\norma{\Delta f}{}^2+\frac{3\bb\pp^2}{4}\Norma{r^{\pp-1}\dtheta f}{}^2+\nu\cc\Norma{r^{\pp-1} \dtheta \nabla f}{}^2\\
 \leq&2\cc\nu (\pp-1)^2\norma{r^{\pp-2}\dtheta f}{}^2+\frac{4\bb^2\pp^2\nu}{\aa}\norma{r^{\pp-1}\dtheta \nabla f}{}^2+ \frac{\bb^2 \pp^4 \nu}{\aa}\norma{r^{\pp-1}\nabla f}{}^2\\
=&2\cc\nu (\pp-1)^2\norma{r^{\pp-2}\dtheta f}{}^2+\frac{2\bb^2\pp^2\nu}{\aa}\bigg(2+\frac{q^2}{2k^2}\bigg)\norma{r^{\pp-1}\dtheta \nabla f}{}^2,
 \end{split}
\end{equation}
where the last one follows since we are localized at frequencies $k$.
Now further restrict \eqref{b2/ac} as follows:
\begin{equation}
\label{restrictionabc}
\frac{\bb^2}{\aa\cc}\leq \frac{1}{4q^2(2+q^2/2)}=:\frac{1}{c_1}.
\end{equation}
Thanks to \eqref{restrictionabc} and the fact that $k\geq 1$, we can absorb the last term on the right-hand side of \eqref{bd:dtphirest} in the left-hand side to infer that 
\begin{equation}
\label{bd:dtphiproof1}
\Dt \Phi +\frac{3\nu}{4}\norma{\nabla f}{}^2+\frac{\aa \nu}{2}\norma{\Delta f}{}^2+\frac{3\bb\pp^2}{4}\Norma{r^{\pp-1} \dtheta f}{}^2+\frac{\cc\nu}{2}\Norma{r^{\pp-1}\dtheta \nabla f}{}^2\leq 2\cc\nu (\pp-1)^2\norma{r^{\pp-2}\dtheta f}{}^2.
\end{equation}
It remains to estimate the last term above, which is not present for $\pp=1$ (see Remark \ref{rem:pp=1} below). This will be done by Lemma \ref{lemma:ralpha-2}, by choosing
\begin{equation}\label{eq:sigmachoice}
\sigma=\frac{\nu}{2\bb\pp^2 k^2}.
\end{equation}
Since $k\geq 1$, we find that there exists some constant $c_2\geq 1$ such that
\begin{align}\label{bd:ralpha-2}
\frac{1}{c_2} \left(\frac{\nu}{k^2}\right)^{\frac{1}{\pp}}\bb^{\frac{\pp-1}{\pp}}\norma{r^{\pp-2}\dtheta f}{}^2&\leq \frac{\nu}{8}\norma{\nabla f}{}^2+\frac{\bb\pp^2}{4}\norma{r^{\pp-1}\dtheta f}{}^2,
\end{align}
where, for $c_\pp$ as in Lemma \ref{lemma:ralpha-2}, we can define 
\begin{align}
\frac{1}{c_2}:= \frac{1}{c_\pp}
\pp^{\frac{2(\pp-1)}{\pp}}2^{-(2+\frac{1}{\pp})}.
\end{align}
Let us impose for the moment that
\begin{equation}
\label{restrictionbc}
2\cc \nu (\pp-1)^2\leq \frac{1}{c_2} \left(\frac{\nu}{k^2}\right)^{\frac{1}{\pp}}\bb^{\frac{\pp-1}{\pp}}.
\end{equation}
In this way, from \eqref{bd:dtphiproof1} and \eqref{bd:ralpha-2} we obtain
\begin{equation}
\label{bd:dtphiproof2}
\Dt \Phi +\frac{5\nu}{8}\norma{\nabla f}{}^2+\frac{\bb\pp^2}{2}\Norma{r^{\pp-1} \dtheta f}{}^2\leq 0.
\end{equation}
Now, in order to satisfy the constraints \eqref{restrictionab}, \eqref{restrictionabc} and \eqref{restrictionbc} for every $\nu,k$, we rescale
$\aa,\bb,\cc$ in such a way so that the inequalities become independent of $\nu,k$. That is, we choose $\aa$, $\bb$ and $\cc$
as in \eqref{eq:PHIk}, namely,
\begin{equation}
\label{chooseabc}
\aa=\aa_0 \frac{\nu^{\frac{2}{\pp+2}}}{k^{\frac{2}{\pp+2}}}, \qquad \bb=\bb_0 \frac{\nu^\frac{2-\pp}{\pp+2}}{k^\frac{4}{\pp+2}}, \qquad \cc=\cc_0 \frac{\nu^{-\frac{2(\pp-1)}{\pp+2}}}{k^{\frac{6}{\pp+2}}},
\end{equation}
for some $\aa_0, \bb_0, \cc_0$ to be chosen properly later but \emph{independent} of $\nu$ and $k$. To see why exactly those exponents, let us look for example at the scaling in $\nu$. Assume that $\aa=\nu^i$, $\bb=\nu^j$ and $\cc=\nu^n$. Then by \eqref{restrictionab}, \eqref{restrictionabc} and \eqref{restrictionbc}, one needs to impose that 
\begin{align}
2i=1+j,\qquad 2j=n+i, \qquad 1+n=\frac{1}{\pp}+j\bigg(\frac{\pp-1}{\pp}\bigg).
\end{align}
Then solving explicitly the system we infer \eqref{chooseabc}. One argues analogously also for the scaling in $k$. All that is missing now
is the norm $f$ in \eqref{bd:dtphiproof2}. With this in mind, we apply  Lemma \ref{lemma:spectralgap} with the same choice of $\sigma$ as
in \eqref{eq:sigmachoice} and deduce that
\begin{align}
\label{bd:normal2}
\frac{1}{c_3} \nu^{\frac{\pp-1}{\pp}}\left(k^2\bb\right)^{\frac{1}{\pp}}\norma{f}{}^2\leq \frac{\nu}{8}\norma{\nabla f}{}^2+\frac{\bb\pp^2}{4}\norma{r^{\pp-1}\dtheta f}{}^2, \qquad c_3:= 2^\frac{3\pp-1}{\pp}\pp^{-\frac{2}{\pp}}.
\end{align}
Using this in \eqref{bd:dtphiproof2}, we find
\begin{equation}
\Dt \Phi +\frac{1}{c_3} \nu^{\frac{\pp-1}{\pp}}\left(k^2\bb\right)^{\frac{1}{\pp}}\norma{f}{}^2+\frac{\nu}{2}\norma{\nabla f}{}^2+\frac{\bb\pp^2}{4}\Norma{r^{\pp-1} \dtheta f}{}^2\leq 0.
\end{equation}
In view of \eqref{chooseabc}, this becomes
\begin{equation}\label{bd:dtphifactout}
\Dt \Phi+ \frac{1}{c_3} \nu^{\frac{\pp}{\pp+2}}k^{\frac{2}{\pp+2}}\bb_0^{\frac{1}{\pp}}\norma{f}{}^2+\frac{\nu}{2}\norma{\nabla f}{}^2+\frac{\bb\pp^2}{4}\Norma{r^{\pp-1} \dtheta f}{}^2\leq 0,
\end{equation}
or, equivalently,
\begin{equation}
\Dt \Phi+ \frac{1}{c_3} \nu^{\frac{\pp}{\pp+2}}k^{\frac{2}{\pp+2}}\bb_0^{\frac{1}{\pp}}\left[2\norma{f}{}^2
+\frac{c_3}{\aa_0\bb_0^{\frac{1}{\pp}}}\aa\norma{\nabla f}{}^2
+c_3\frac{\bb_0^\frac{\pp-1}{\pp}}{ \cc_0}\cc\Norma{r^{\pp-1} \dtheta f}{}^2\right]\leq 0.
\end{equation}
Here it is crucial that in front of $\aa$ and $\cc$ in the brackets, we have something independent of $\nu$ and $k$. In fact we want to use equivalence \eqref{equivalencephi}, so assuming 
\begin{align}\label{restb0a0}
\frac{c_3}{ \aa_0\bb_0^{\frac{1}{\pp}}}\geq 3,
\end{align}
and
\begin{align}\label{restb0c0}
c_3\frac{\bb_0^\frac{\pp-1}{\pp}}{ \cc_0}\geq 3,
\end{align}
we conclude that
\begin{equation}\label{eq:diffineq}
\Dt \Phi +2\varepsilon_0\nu^{\frac{\pp}{\pp+2}}k^{\frac{2}{\pp+2}} \Phi \leq 0,
\end{equation}
where $\varepsilon_0$ is determined as follows: let
\begin{align}
\delta=\min\left\{\frac{1}{2c_2 (\pp-1)^2},\frac{c_3}{3}\right\},
\end{align}
and choose $\aa_0, \bb_0, \cc_0$ as 
\begin{equation}
\aa_0=\frac{c_1}{\delta}\bb_0^{\frac{q+1}{q}}, \qquad \bb_0=\min\left\{\frac{\delta^2}{4c_1^2}, \frac{c_3\delta}{3c_1}\right\}^\frac{\pp}{\pp+2}, \qquad \cc_0=\delta \bb_0^\frac{\pp-1}{\pp},
\end{equation}
and it is straightforward to verify that $\aa_0,\bb_0,\cc_0$ satisfies \eqref{restrictionab}, \eqref{restrictionabc}, \eqref{restrictionbc}, \eqref{restb0a0} and \eqref{restb0c0}.
Then $\varepsilon_0$ could be chosen as
\begin{equation}
\varepsilon_0= \frac{\beta_0^\frac{1}{\pp}}{2c_3} ,
\end{equation}
hence proving \eqref{eq:diffineq}. The proof is concluded.

\end{proof}

\begin{remark}[The case $\pp=1$]\label{rem:pp=1}
As noted in the proof, the case $\pp=1$ does not require estimating the most dangerous error term in \eqref{bd:dtphiproof1}. In fact, 
one does not even need to include the $\cc$-term in the functional, since from \eqref{eq:PHIk} it is apparent that the $\cc$-term is 
of the same order as $\| f_k\|$. This is also the reason why Lemma \ref{lemma:ralpha-2} is not needed in this case.
\end{remark}

\subsection{Reconstruction of the $X$-norm}
\label{sec:recnorm}
Theorem \ref{th:enhanced} does not directly imply the decay in the $X$-norm as stated in Theorem \ref{th:mainth}. In this section we prove Theorem \ref{th:mainth} in the case $\pp\geq 1$, and for initial data localized at band $k\in\N$. It is convenient to define the functional
\begin{equation}
W_k(t)=\frac{1}{2}\norma{f_k(t)}{}^2+\frac{\cc_0}{4}\norma{r^{\pp-1}f_k(t)}{}^2,
\end{equation}
which is equivalent to the $X$-norm. Here we prove the following localized version of Theorem \ref{th:mainth}.
\begin{proposition}\label{pr:Wkappa}
Let $\pp\geq 1$ and $\nu k^{-1} \in (0,1]$. Then
\begin{equation}\label{eq:Wkap}
W_k(t)\leq C_0W_k(0) \exp\left(-\frac{2\varepsilon_0\nu^{\frac{\pp}{\pp+2}}k^{\frac{2}{\pp+2}}}{1+\frac{2(\pp-1)}{\pp+2}\left(|\ln\nu|+\ln k\right)}t\right),
\end{equation}
for all $t\geq 0$, and where $C_0\geq 1$ is a constant independent of $\nu,k$.
\end{proposition}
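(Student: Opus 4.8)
The plan is to transfer the clean enhanced decay of $\Phi_k$ from Theorem \ref{th:enhanced} down to the lower-order functional $W_k$, paying for it with the logarithmic correction. Write $\lambda_k:=\nu^{\frac{\pp}{\pp+2}}k^{\frac{2}{\pp+2}}$ for the rate in \eqref{eq:diffPHI} and set $M:=(k/\nu)^{\frac{2(\pp-1)}{\pp+2}}$, so that $\ln M=\frac{2(\pp-1)}{\pp+2}(|\ln\nu|+\ln k)$ is exactly the quantity appearing in the denominator of \eqref{eq:Wkap}. Since $\norma{r^{\pp-1}\dtheta f_k}{}^2=k^2\norma{r^{\pp-1}f_k}{}^2$ and $\cc k^2=\cc_0 M$ by \eqref{chooseabc}, the equivalence \eqref{equivalencephi} gives on the one hand $W_k\le\Phi_k$ (using $M\ge1$, which holds as $\nu k^{-1}\le1$ and $\pp\ge1$), and on the other the one-sided comparison $\Phi_k\le 3M\,W_k+\tfrac34\aa\norma{\nabla f_k}{}^2$. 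The obstruction is already visible: $\Phi_k$ exceeds $W_k$ by the factor $M\to\infty$ as $\nu\to0$, and it moreover carries the gradient term $\aa\norma{\nabla f_k}{}^2$, which the $X$-norm does not control.

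First I would prove a non-growth bound for $W_k$. Differentiating $W_k=\tfrac12\norma{f_k}{}^2+\tfrac{\cc_0}4\norma{r^{\pp-1}f_k}{}^2$ using \eqref{dtf} and \eqref{rdtheta} (the latter divided by $k^2$) gives $\Dt W_k=-\nu\norma{\nabla f_k}{}^2-\tfrac{\cc_0}2\nu\norma{r^{\pp-1}\nabla f_k}{}^2+\cc_0\nu(\pp-1)^2\norma{r^{\pp-2}f_k}{}^2$. The single positive term is handled through the pointwise identity $\norma{r^{\pp-1}\nabla f_k}{}^2=\norma{r^{\pp-1}\dr f_k}{}^2+k^2\norma{r^{\pp-2}f_k}{}^2\ge k^2\norma{r^{\pp-2}f_k}{}^2$: whenever $k^2\ge2(\pp-1)^2$ it is absorbed outright, so $\Dt W_k\le0$ and $W_k$ is monotone; for the finitely many modes $k^2<2(\pp-1)^2$ one instead absorbs it into $\nu\norma{\nabla f_k}{}^2$ via the interpolation Lemma \ref{lemma:ralpha-2}, leaving a remainder bounded by $C_\pp\nu\norma{r^{\pp-1}f_k}{}^2\le C_\pp\nu W_k$. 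In all cases $\Dt W_k\le C_\pp\nu W_k$, hence $W_k(t)\le W_k(0)\e^{C_\pp\nu t}$.

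Next I would remove the gradient term from $\Phi_k$ at the initial time by a short parabolic-regularization (restart) step. Integrating \eqref{dtf} gives $\nu\int_0^{1/\lambda_k}\norma{\nabla f_k}{}^2\dd s\le\tfrac12\norma{f_k(0)}{}^2$, so by averaging there is a time $t_0\in[0,1/\lambda_k]$ with $\norma{\nabla f_k(t_0)}{}^2\le\tfrac{\lambda_k}{2\nu}\norma{f_k(0)}{}^2$. The algebraic identity $\aa\lambda_k=\aa_0\nu$ (read off from \eqref{chooseabc}) then yields $\aa\norma{\nabla f_k(t_0)}{}^2\le\aa_0 W_k(0)$, while the non-growth bound and $\nu t_0\le\nu/\lambda_k\le1$ give $W_k(t_0)\le C_\pp W_k(0)$. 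Plugging into the comparison of the first paragraph, $\Phi_k(t_0)\le 3M\,W_k(t_0)+\tfrac34\aa\norma{\nabla f_k(t_0)}{}^2\le C_\pp M\,W_k(0)$. As the equation is autonomous, Theorem \ref{th:enhanced} applies from $t_0$, and combined with $W_k\le\Phi_k$ this gives, for all $t\ge0$, the prefactor-decay bound $W_k(t)\le C_\pp M\,W_k(0)\e^{-2\eps_0\lambda_k t}$, the shift factor $\e^{2\eps_0\lambda_k t_0}\le\e^{2\eps_0}$ being absorbed into $C_\pp$.

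Finally I would combine the non-growth bound $W_k(t)\le W_k(0)\e^{C_\pp\nu t}$ with the prefactor-decay bound $W_k(t)\le C_\pp M\,W_k(0)\e^{-2\eps_0\lambda_k t}$ by an elementary line-crossing argument to extract the single exponential at the log-corrected rate $\lambda':=\frac{2\eps_0\lambda_k}{1+\ln M}$. Setting $t_*:=\frac{\ln(C_\pp M)}{2\eps_0\lambda_k}$, one uses the decaying bound for $t\ge t_*$ and the growth bound for $t\le t_*$; checking that both stay below $C_0 W_k(0)\e^{-\lambda' t}$ reduces to $\lambda' t_*\le 1+\ln C_\pp$ and $\nu t_*\le C_\pp$, the latter holding because $\nu t_*\sim(k/\nu)^{-\frac{2}{\pp+2}}\big(1+\ln(k/\nu)\big)$ is uniformly bounded for $\nu k^{-1}\le1$. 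This fixes $C_0$ as a constant depending only on $\pp$. The main obstacle throughout is exactly the factor $M$: the enhanced decay of $\Phi_k$ can be passed to $W_k$ only after paying $\ln M$ in the rate, which is the origin of the logarithm, and the remaining technical input is Lemma \ref{lemma:ralpha-2}, needed to close the non-growth estimate for the low modes. Note that for $\pp=1$ one has $M=1$ and the correction disappears, in agreement with Remark \ref{rem:pp=1}.
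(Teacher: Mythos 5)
Your proposal is correct and follows essentially the same strategy as the paper's own proof: a non-growth estimate for $W_k$ (obtained from \eqref{dtf}, \eqref{rdtheta} and Lemma \ref{lemma:ralpha-2}), a mean-value selection of a time $t_0\lesssim 1/\lambda_k$ at which $\aa\norma{\nabla f_k}{}^2$ is controlled so that Theorem \ref{th:enhanced} can be launched at the cost of the prefactor $M=\cc k^2/\cc_0$, and an elementary crossing argument trading that prefactor for the logarithmic correction in the rate. The differences are purely organizational (you close the non-growth step via the differential inequality $\Dt W_k\le C_\pp\nu W_k$ rather than the paper's Duhamel-plus-energy-equality bound, and you phrase the final step as a line-crossing at $t_*$ rather than the paper's case split at $T_{\nu,k,\ln}$ with the inequality $a-bt\le 1-\tfrac{b}{a}t$), not substantive.
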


Notice that Proposition \ref{pr:Wkappa} implies a linear semigroup estimate for \eqref{eq:cauchyfourier} in a weighted $L^2$-space. 
Theorem \ref{th:mainth}, and in particular \eqref{eq:L2est}, simply follows by summing over $k\in \N_0$ in \eqref{eq:Wkap}.

\begin{proof}[Proof of Proposition \ref{pr:Wkappa}]
First of all, recall the definition of $\gamma$ given in \eqref{chooseabc}. Then observe that for $\nu k^{-1} \in (0,1]$ we get 
\begin{equation}
\label{bd:gammak2}
\frac{\cc}{\cc_0} k^2=\left(\frac{k}{\nu}\right)^{\frac{2(\pp-1)}{\pp+2}}\geq 1,
\end{equation}
since $\pp\geq 1$. So, in view of the equivalence \eqref{equivalencephi} for the functional $\Phi_k$, we infer that 
\begin{equation}
\label{bd:wphi}
W_k(t)\leq \frac12\norma{f_k(t)}{}^2+\frac{\gamma}{4}\norma{r^{\pp-1}\dtheta f_k(t)}{}^2\leq \Phi_k(t). 
\end{equation}
Now we introduce the following two fixed times
\begin{equation}
\label{def:Tnuk}
T_{\nu,k}:= \frac{1}{2\varepsilon_0\nu^{\frac{\pp}{\pp+2}}k^{\frac{2}{\pp+2}}}, \qquad T_{\nu,k,\ln}:= \frac{1+\frac{2(\pp-1)}{\pp+2}\left(|\ln \nu|+\ln k\right)}{2\varepsilon_0\nu^{\frac{\pp}{\pp+2}}k^{\frac{2}{\pp+2}}},
\end{equation}
where $T_{\nu,k}$ is for technical convenience and $T_{\nu,k,\ln}$ is the expected time scale for the enhanced dissipation mechanism. We divide
the proof in two cases.

\subsection*{Case 1}
For $t\in (0,T_{\nu,k,\ln})$, Theorem \ref{th:mainth} is proved if we are able to show that 
\begin{equation}
\label{bd:enhanshort}
W_k(t)\leq C_1W_k(0).
\end{equation}
Notice that for $\pp=1$, this is trivial. In all other cases,
by \eqref{dtf}, we have that
\begin{equation}
\label{energyeq}
\norma{f_k(T)}{}^2+2\nu \int_{0}^T\norma{\nabla f_k(\tau)}{}^2\dd\tau =\norma{f_k^{in}}{}^2, \qquad \forall T\geq 0.
\end{equation} 
In particular, we know that $\norma{f_k(\cdot)}{}^2$ is monotonically decreasing, and
\begin{equation}
\label{bd:decrf}
\norma{f_k(t)}{}^2\leq \norma{f_k^{in}}{}^2. 
\end{equation}
Now we need to infer some property on $\norma{r^{\pp-1}f_k(t)}{}^2$, in particular we cannot hope in general for some monotonicity. Then we proceed as follows: consider the equality \eqref{rdtheta} and define $C_\pp=4(\pp-1)^2$. By Lemma \ref{lemma:ralpha-2} we have that 
\begin{align}
\Dt \norma{r^{\pp-1}f_k}{}^2\leq \nu C_\pp^\pp\norma{\nabla f_k}{}^2+\nu \norma{r^{\pp-1}f_k}{}^2.
\end{align}
Then for any $t\in(0,T_{\nu,k,\ln})$ we infer that
\begin{equation}\label{eq:buffon}
\begin{split}
\norma{r^{\pp-1} f_k(t)}{}^2&\leq \left( \norma{r^{\pp-1} f_k^{in}}{}^2+\nu C_\pp^\pp \int_{0}^{t}\e^{-\nu s}\norma{\nabla f_k(s)}{}^2 \dd s\right)\e^{\nu t}\\
&\leq \left(\norma{r^{\pp-1} f_k^{in}}{}^2+\frac{C_\pp^\pp}{2} \norma{f_k^{in}}{}^2\right)\e^{\nu t},
\end{split}
\end{equation}
where in the last line we have bounded $\e^{-\nu s}$ by $1$ and used the energy equality \eqref{energyeq}.
Finally notice that 
\begin{equation}
\nu T_{\nu,k,\ln}= \left(\frac{\nu}{k}\right)^{2/(\pp+2)}\frac{1+\frac{2(\pp-1)}{\pp+2}\left(|\ln \nu|+\ln k\right)}{2\varepsilon_0}\leq \widetilde{C}_1,
\end{equation} 
for some $\widetilde{C}_1$ which depends only on $q$ and $\varepsilon_0$. The last inequality follows simply because for any $\beta>0$ we have $\lim_{x\to 0}x^\beta \ln x=0$ and we are assuming that $\nu k^{-1} \in (0,1]$. So from \eqref{eq:buffon} we get that 
\begin{equation}
\label{bd:ralpha-1t*}
\norma{r^{\pp-1} f_k(t)}{}^2\leq \left(\norma{r^{\pp-1} f_k^{in}}{}^2+\frac{C_\pp^\pp}{2} \norma{f_k^{in}}{}^2\right)\e^{\widetilde{C}_1}.
\end{equation} 
By combining \eqref{bd:decrf} with \eqref{bd:ralpha-1t*} we prove \eqref{bd:enhanshort} for a proper $C_1$, hence proving Proposition  \ref{pr:Wkappa} for all $t\leq T_{\nu,k,\ln}$.
\end{proof}

\subsection*{Case 2}
Now we pass to the case $t\geq T_{\nu,k,\ln}$. First of all, consider the energy equality  \eqref{energyeq} for $T=T_{\nu,k}$.
Invoking the mean value theorem, there exists
\begin{equation}
\label{def:t*} t^*\in (0,T_{\nu,k}),
\end{equation} 
such that
\begin{equation*}
2\nu T_{\nu,k}\norma{\nabla f_k(t^*)}{}^2\leq \norma{f_k^{in}}{}^2,
\end{equation*}
but recalling the definition of $\aa$, see \eqref{chooseabc}, we infer that
\begin{equation}
\label{bd:a/a0f}
\frac{\aa}{\aa_0}\norma{\nabla f_k(t^*)}{}^2\leq \varepsilon_0\norma{f_k^{in}}{}^2.
\end{equation}
Using \eqref{bd:a/a0f} in the equivalence \eqref{equivalencephi} for $\Phi_k$, we get that 
\begin{equation}
\label{bd:phit*}
\Phi_k(t^*)\leq \frac{1}{2}\norma{f_k(t^*)}{}^2+3\aa_0\varepsilon_0 \norma{f_k^{in}}{}^2+\frac{3}{4}\cc k^2\norma{r^{\pp-1}f_k(t^*)}{}^2.
\end{equation}
Since $t^*\leq T_{\nu,k}\leq T_{\nu,k,\ln}$, by combining \eqref{bd:decrf} and \eqref{bd:ralpha-1t*} with \eqref{bd:phit*}, we conclude that 
\begin{equation}
\label{bd:phit*W0}
\begin{split}
\Phi_k(t^*)&\leq 3\frac{\cc k^2}{\cc_0}\left[\left(\frac{\cc_0}{6\cc k^2}+\frac{\aa_0\varepsilon_0\gamma_0}{\gamma k^2}\right)\norma{f_k^{in}}{}^2+\frac{\gamma_0}{4}\norma{r^{\pp-1}f_k^{in}}{}^2\right] \\
&\leq \widetilde{C}_2\gamma k^2W_k(0),
\end{split}
\end{equation}
where the last one follows by \eqref{bd:gammak2} and $\widetilde{C}_2$ is a constant that does not depend on $\nu,k$.
Then by \eqref{bd:wphi}, \eqref{bd:phit*W0} and applying the differential inequality \eqref{eq:diffPHI} starting from $t^*$, we have that 
\begin{align}
W_k(t)&\leq \Phi_k(t)\leq \e^{-2\varepsilon_0\nu^{\frac{\pp}{\pp+2}}k^{\frac{2}{\pp+2}}(t-t^*)}\Phi_k(t^*)\notag\\
&\leq \widetilde{C}_2 \e^{2\varepsilon_0\nu^{\frac{\pp}{\pp+2}}k^{\frac{2}{\pp+2}}t^*}\cc k^2\e^{-2\varepsilon_0\nu^{\frac{\pp}{\pp+2}}k^{\frac{2}{\pp+2}}t}W_k(0)\notag\\
&\leq \widetilde{C}_2\gamma_0 \e\frac{\cc k^2}{\gamma_0}\e^{-2\varepsilon_0\nu^{\frac{\pp}{\pp+2}}k^{\frac{2}{\pp+2}}t}W_k(0),
\end{align}
where in the last inequality we have used the fact that $t^*\leq T_{\nu,k}$. For $\pp=1$, we are done, since $\cc k^2\sim1$. For $\pp>1$, notice that
\begin{align}
\e\frac{\cc k^2}{\gamma_0}\e^{-2\varepsilon_0\nu^{\frac{\pp}{\pp+2}}k^{\frac{2}{\pp+2}}t}=\e\left(\frac{k}{\nu}\right)^{\frac{2(\pp-1)}{\pp+2}}\e^{-2\varepsilon_0\nu^{\frac{\pp}{\pp+2}}k^{\frac{2}{\pp+2}}t}= \e^{1+\frac{2(\pp-1)}{\pp+2}\left(|\ln \nu|+\ln k\right)-2\varepsilon_0\nu^{\frac{\pp}{\pp+2}}k^{\frac{2}{\pp+2}}t}.
\end{align}
Now let $a=1+\frac{2(\pp-1)}{\pp+2}\left(|\ln\nu|+\ln k\right)$ and $b=2\varepsilon_0\nu^{\frac{\pp}{\pp+2}}k^{\frac{2}{\pp+2}}$. By the definition of $T_{\nu,k,\ln}$, we are interested in times $t\geq a/b$. Then observe the following basic inequality 
\begin{align}
a-bt\leq 1-\frac{b}{a}t, \qquad \text{for } t\geq \frac{a}{b},
\end{align}
which is true for any $a>1$ and $b>0$. 
%The "proof" can be deleted but is the one below
%\begin{equation*}
%a-bt=a-\left(\frac{a-1}{a}\right)bt-\frac{b}{a}t\leq 1-\frac{b}{a}t.
%\end{equation*}
So finally we have that 
\begin{equation}
W_k(t)\leq C_2 W_k(0)\exp\left(-\frac{2\varepsilon_0\nu^{\frac{\pp}{\pp+2}}k^{\frac{2}{\pp+2}}}{1+\frac{2(\pp-1)}{\pp+2}\left(|\ln\nu|+\ln k\right)}t\right),
\end{equation}
where $C_2=\e\widetilde{C}_2\gamma_0$. Then by choosing $C_0=\max \{C_1,C_2\}$ we conclude the proof of Proposition  \ref{pr:Wkappa}.

\begin{remark}[No log-correction for $\pp=1$]
We stress once more that for $\pp=1$, there is no logarithmic correction for $W_k(t)$ in \eqref{eq:Wkap}. This is essentially due to the fact
that the $\cc$-term is of the same order as $\|f_k\|$.
\end{remark}

\appendix

\section{Radial Fourier expansion and two useful inequalities}\label{app:ineq}
We prove here two useful inequalities for functions localized to a Fourier band as in \eqref{eq:band}. Namely, given $g\in L^2$,
we write
\begin{align}
g(r,\theta)=\sum_{k\in\N_0}g_k(r,\theta),
\end{align} 
with
\begin{equation}\label{eq:band2}
 g_k(r,\theta):=\sum_{|\ell|=k} \mathfrak{g}_\ell(r)\e^{i\ell \theta}, \qquad \mathfrak{g}_\ell(r)=\frac{1}{2\pi}\int_0^{2\pi}g(r,\theta)\e^{-i\ell\theta}\dd \theta.
\end{equation}
It is worth noticing here that if a function $g$ is smooth, then 
\begin{align}\label{eq:fourexp}
 \mathfrak{g}_\ell(r)\sim r^{|\ell|}(a_0+a_1 r^2+a_2 r^4+\ldots),
\end{align}
as it can be seen by simply applying the operator $\Delta$ to $ \mathfrak{g}_\ell$ (see \cite{BG98}). This is useful when performing integration 
by parts and make sure that no term at $r=0$ arises.

The first inequality is reminiscent of the spectral gap in \cite[Proposition 2.7]{BCZ15}, originally inspired by \cite{GNN09}.

\begin{lemma}
	\label{lemma:spectralgap}
	Let $\pp\geq 1$, and let $g\in H^1$ such that $r^{\pp-1}g\in L^2$. Assume $k\in\N$, and let $g_k$ be defined as in \eqref{eq:band2}. 
	For any $\sigma >0$, it holds that 
	\begin{equation}
	\label{bd:spectral}
	\sigma^{\frac{\pp -1}{\pp}} \left\|g_k\right\|^2\leq\sigma\left\|\frac{g_k}{r}\right\|^2+\norma{r^{\pp-1}g_k}{}^2\leq \sigma \norma{\nabla g_k}{}^2+\norma{r^{\pp-1}g_k}{}^2.
	\end{equation}
\end{lemma}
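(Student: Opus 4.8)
The plan is to establish the two inequalities separately, the right-hand one being immediate and the left-hand one carrying the actual content. After cancelling the common summand $\norma{r^{\pp-1}g_k}{}^2$, the upper bound reduces to $\norma{g_k/r}{}^2 \le \norma{\nabla g_k}{}^2$. This follows at once from the identity \eqref{eqnormink}, namely $\norma{\nabla g_k}{}^2 = \norma{\dr g_k}{}^2 + k^2\norma{g_k/r}{}^2$, together with the standing hypothesis $k\ge1$, so that $k^2\ge1$.

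For the lower bound I would split the radial integral at a threshold $R>0$ to be optimized. Starting from $\norma{g_k}{}^2 = \int_0^\infty\int_\T |g_k|^2\, r\dd r\dd\theta$, I separate the regions $\{r\le R\}$ and $\{r>R\}$. On the inner region the elementary bound $r^2\le R^2$ gives $\int_{r\le R}|g_k|^2\, r\dd r\dd\theta \le R^2\norma{g_k/r}{}^2$, while on the outer region $r^{-2(\pp-1)}\le R^{-2(\pp-1)}$ (valid precisely because $\pp\ge1$) gives $\int_{r>R}|g_k|^2\, r\dd r\dd\theta \le R^{-2(\pp-1)}\norma{r^{\pp-1}g_k}{}^2$. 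Summing the two contributions yields
\[
\norma{g_k}{}^2 \le R^2\,\norma{g_k/r}{}^2 + R^{-2(\pp-1)}\,\norma{r^{\pp-1}g_k}{}^2.
\]

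The last step is the optimization in $R$. Choosing $R=\sigma^{1/(2\pp)}$, so that $R^2=\sigma^{1/\pp}$ and $R^{-2(\pp-1)}=\sigma^{-(\pp-1)/\pp}$, and multiplying the displayed estimate through by $\sigma^{(\pp-1)/\pp}$, produces exactly the coefficient $\sigma$ in front of $\norma{g_k/r}{}^2$ and the coefficient $1$ in front of $\norma{r^{\pp-1}g_k}{}^2$, which is the desired left-hand inequality. The argument is a pure interpolation between two weighted norms and involves no integration by parts, so no boundary term at $r=0$ can arise; the only finiteness point, that $g_k/r\in L^2$, follows from $g\in H^1$ again through \eqref{eqnormink}, consistently with the near-origin behaviour $\mathfrak{g}_\ell\sim r^{|\ell|}$ recorded in \eqref{eq:fourexp}. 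I expect no genuine obstacle here: the single non-mechanical idea is to recognize that the correct threshold $R$ must balance the two competing estimates, and it is exactly this balancing that pins down the exponent $(\pp-1)/\pp$ appearing on the left-hand side.
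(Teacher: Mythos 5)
Your proposal is correct and follows essentially the same argument as the paper's proof: splitting the radial integral at a threshold $R$, bounding $r^2\le R^2$ inside and $r^{-2(\pp-1)}\le R^{-2(\pp-1)}$ outside (using $\pp\ge 1$), optimizing with $R=\sigma^{\frac{1}{2\pp}}$, and deducing the second inequality from $\norma{\nabla g_k}{}^2=\norma{\dr g_k}{}^2+k^2\norma{g_k/r}{}^2$ with $k\ge 1$. There is nothing to add.
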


\begin{proof}[Proof of Lemma \ref{lemma:spectralgap}] 
The proof will be performed for a smooth function $g$. It is clear that the original statement holds upon passing to the limit in the various norms.
Let $R>0$ to be chosen, then 
	\begin{align}
	\sigma^{\frac{\pp-1}{\pp}}\norma{g_k}{}^2&=\sigma^{\frac{\pp-1}{\pp}}\iint_{r\leq R} r^2 \frac{|g_k|^2}{r^2}r\dd r \dd\theta+\sigma^{\frac{\pp-1}{\pp}}\iint_{r>R} \frac{1}{r^{2(\pp-1)}}r^{2(\pp-1)}|g_k|^2 r\dd r\dd\theta\\
	&\leq \sigma^{\frac{\pp-1}{\pp}}R^{2}\left\|\frac{g_k}{r}\right\|^2+\sigma^{\frac{\pp-1}{\pp}}R^{-2(\pp-1)}\norma{r^{\pp-1}g_k}{}^2,
	\end{align}
	where in the last line we use the fact that $q\geq 1$. Hence choosing 
	\begin{equation}
	R=\sigma^{\frac{1}{2\pp}},
	\end{equation}
	we infer that 
	\begin{equation}
	\sigma^{\frac{\pp -1}{\pp}} \left\|g_k\right\|^2\leq\sigma\left\|\frac{g_k}{r}\right\|^2+\norma{r^{\pp-1}g_k}{}^2.
	\end{equation}
	The second inequality simply follows from the fact that  
	$\norma{\nabla g_k}{}^2=\norma{\dr g_k}{}^2+k^2\norma{g_k/r}{}^2$. Hence we have proved the lemma.
\end{proof}
The second lemma enters in the main proofs as a way to essentially control the behavior at the origin of each $g_k$. 
It is stated as follows.

\begin{lemma}\label{lemma:ralpha-2}
Let $\pp\geq 1$, and let $g\in H^1$ such that $r^{\pp-1}g\in L^2$. Assume $k\in\N$, and let $g_k$ be defined as in \eqref{eq:band2}. 
	There exists a constant $c_\pp\geq 2$ depending
	only on $\pp$ such that
	\begin{equation}
	\label{bd:ralpha2}
	\frac{1}{c_\pp}\sigma^{\frac{1}{\pp}}\norma{r^{\pp-2}g_k}{}^2\leq \sigma \norma{\nabla g_k}{}^2+\norma{r^{\pp-1} g_k}{}^2,
	\end{equation}
	for any $\sigma >0$.
\end{lemma}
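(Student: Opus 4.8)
The plan is to mimic the weighted splitting argument already used for Lemma \ref{lemma:spectralgap}, which has exactly the same flavour. As there, I would first reduce to a smooth $g$ by density, carrying out all estimates in the smooth category and passing to the limit in the various norms at the very end. The expansion \eqref{eq:fourexp} records that $g_k \sim r^k$ as $r\to 0$, so that every integral below converges precisely because $k\geq 1$; I would note this at the outset, since it is the one spot where the hypothesis $k\in\N$ (rather than $k=0$) is genuinely needed, and it is what guarantees that no boundary contribution at $r=0$ ever appears.

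First I would write
\[
\norma{r^{\pp-2}g_k}{}^2=\int_0^\infty\int_\T r^{2(\pp-2)}|g_k|^2\,r\,\dd r\,\dd\theta,
\]
and split the integral at a radius $R>0$ to be chosen. On the inner region $\{r\leq R\}$ I would factor $r^{2(\pp-2)}=r^{-2}r^{2(\pp-1)}$ and use $r^{2(\pp-1)}\leq R^{2(\pp-1)}$, valid since $\pp\geq 1$, to bound this piece by $R^{2(\pp-1)}\norma{g_k/r}{}^2$; the identity $\norma{\nabla g_k}{}^2=\norma{\dr g_k}{}^2+k^2\norma{g_k/r}{}^2$ from \eqref{eqnormink} together with $k\geq 1$ then yields $\norma{g_k/r}{}^2\leq\norma{\nabla g_k}{}^2$. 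On the outer region $\{r>R\}$ I would instead use $r^{-2}\leq R^{-2}$ in the same factorization to bound it by $R^{-2}\norma{r^{\pp-1}g_k}{}^2$. Combining the two contributions gives
\[
\norma{r^{\pp-2}g_k}{}^2\leq R^{2(\pp-1)}\norma{\nabla g_k}{}^2+R^{-2}\norma{r^{\pp-1}g_k}{}^2.
\]

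The final step is to optimize the split point to produce the scaling $\sigma^{1/\pp}$ in the statement. Multiplying through by $\sigma^{\frac{1}{\pp}}$ and choosing $R=\sigma^{\frac{1}{2\pp}}$ makes the two coefficients collapse to exactly $\sigma$ and $1$, since $\sigma^{\frac{1}{\pp}}R^{2(\pp-1)}=\sigma$ and $\sigma^{\frac{1}{\pp}}R^{-2}=1$. This produces \eqref{bd:ralpha2} with constant $1$, hence a fortiori with the stated $c_\pp\geq 2$. Because the argument is a direct weighted splitting with no integration by parts, I do not anticipate any real obstacle; the only delicate point is the integrability at the origin in the range $\pp\in[1,2)$, where the weight $r^{2(\pp-2)}$ is singular, but this is exactly tamed by the vanishing $g_k\sim r^k$ with $k\geq 1$ noted above, so the whole estimate goes through uniformly in $\pp\geq 1$.
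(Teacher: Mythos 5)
Your proof is correct, and it takes a genuinely different---and simpler---route than the paper's. The paper argues by cases: $\pp=1$ is trivial; for $\pp\geq 2$ it splits at $r=R$, bounds the inner region by $R^{2(\pp-2)}\norma{g_k}{}^2$ (which requires $2(\pp-2)\geq 0$), and then must invoke Lemma \ref{lemma:spectralgap} to absorb $\norma{g_k}{}^2$ into the right-hand side; and for the delicate range $\pp\in(1,2)$, where that monotonicity fails, it runs an entirely separate argument via integration by parts, a three-exponent H\"older inequality, and Young's inequality. Your factorization $r^{2(\pp-2)}=r^{2(\pp-1)}\cdot r^{-2}$ on \emph{both} regions removes the case analysis altogether: on $\{r\leq R\}$ you only need $r^{2(\pp-1)}\leq R^{2(\pp-1)}$, true for every $\pp\geq 1$, and the resulting term $R^{2(\pp-1)}\norma{g_k/r}{}^2$ is closed directly by \eqref{eqnormink} and $k\geq 1$, with no appeal to Lemma \ref{lemma:spectralgap}; the outer region is identical to the paper's. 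With $R=\sigma^{1/(2\pp)}$ this yields \eqref{bd:ralpha2} with constant $1$, uniformly in $\pp\geq 1$, which is in fact stronger than the stated lemma (the paper obtains $c_\pp=2$ for $\pp\geq 2$ and a $\pp$-dependent constant for $\pp\in(1,2)$). Two small remarks: first, the place where $k\geq 1$ is genuinely indispensable is the step $\norma{g_k/r}{}\leq \norma{\nabla g_k}{}$ (for $k=0$ there is no such Hardy-type control in two dimensions), rather than integrability at the origin per se---indeed, since your argument rests on two pointwise inequalities and involves no integration by parts, it needs no vanishing of $g_k$ at $r=0$ to kill boundary terms, and the bound holds even if some integrals are a priori infinite. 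Second, the only thing the paper's harder $\pp\in(1,2)$ argument produces that yours does not is an extra factor $k^{(2-\pp)/\pp}$ on the left-hand side, which the paper itself discards using $k\geq 1$; your proof would recover a comparable gain simply by retaining the factor $k^{-2}$ coming from \eqref{eqnormink}, so nothing essential is lost.
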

Lemma \ref{lemma:ralpha-2} is crucial in order to close the estimate in the weighted space $X$ that appears in the main Theorem \ref{th:mainth}.

\begin{remark}
%Lemma \ref{lemma:spectralgap} holds if we replace $\nabla$ with $\de_r$ on the right-hand side of \eqref{bd:spectral}, up to a big multiplicative constant and under the assumption that $\sigma\ll 1$ see
%
%Lemma \ref{lemma:ralpha-2}
%	\textcolor{red}{Lemma \ref{lemma:ralpha-2}, for the case $q>2$ it is true also if $g\in L^2(rdr)$ and we substitute $\nabla$ with $\dr$. In the case $q\in(1,2)$, it is important to be in two space dimension because $g/r$ it is not a part of the gradient.  }
%\end{remark}
%\begin{remark}
	The exponents of $\sigma$ given in Lemma \ref{lemma:spectralgap} and \ref{lemma:ralpha-2} are the natural ones given by a simple scaling argument.
\end{remark}

\begin{proof}[Proof of Lemma \ref{lemma:ralpha-2}]
We preliminary note that the case $\pp=1$ is trivial, since $\| g_k/r\|\leq \|\nabla g_k\|$ so \eqref{bd:ralpha2} holds with $c_\pp=1$. As for the rest, we divide the proof in two cases, namely $\pp\in (1,2)$ and $\pp\geq 2$.
	\subsection*{Case $\pp\geq 2$} In this case, let $R>0$ to be chosen later. Then we get that 
	\begin{align}
	\sigma^{\frac{1}{\pp}}\norma{r^{\pp-2}g_k}{}^2&=\sigma^{\frac{1}{\pp}}\iint_{r\leq R} r^{2(\pp-2)}|g_k|^2 r\dd r\dd\theta+\sigma^{\frac{1}{\pp}} \iint_{r>R}\frac{1}{r^2} r^{2(\pp-1)}|g_k|^2 r\dd r\dd\theta \notag\\
	&\leq \sigma^{\frac{1}{\pp}}R^{2(\pp-2)}\norma{g_k}{}^2+\sigma^{\frac{1}{\pp}}R^{-2}\norma{r^{\pp-1}g_k}{}^2,
	\end{align}
	where in the last line, for the first term, we have used the fact that $\pp\geq 2$. Now we take advantage of Lemma \ref{lemma:spectralgap}, so choose 
	\begin{equation}
	R=\sigma^{\frac{1}{2\pp}},
	\end{equation}
	to get that 
	\begin{align}
	\sigma^{\frac{1}{\pp}}\norma{r^{\pp-2}g_k}{}^2&\leq \sigma^{\frac{\pp-1}{\pp}}\norma{g_k}{}^2+\norma{r^{\pp-1}g_k}{}^2
	\leq \sigma \norma{\nabla g_k}{}^2+2\norma{r^{\pp-1}g_k}{}^2,
	\end{align}
	and the proof is completed with $c_\pp=2$.

	\subsection*{Case $\pp \in (1,2)$} Writing down explicitly, we have that 
	\begin{equation}
	\label{eq:proofq12}
\begin{split}
	\norma{r^{\pp-2}g_k}{}^2&=\iint r^{2(\pp-2)}|g_k|^2r\dd r\dd\theta= \frac{1}{2(\pp-2)+2}\iint \dr (r^{2(\pp-2)+2})|g_k|^2 \dd r\dd\theta\\
	&=- \frac{1}{\pp-1}\iint r^{2(\pp-2)+1}g_k\dr g_k r\dd r\dd\theta.
\end{split}
	\end{equation}
	Now let $a>0$ to be chosen later and rewrite the previous integral, without constants in front, as follows 
	\begin{equation}
	\label{bd:proofq12}
	\begin{split}
	&\left|\iint (\dr g_k) g_k r^{2(\pp-2)+1} r\dd r\dd\theta\right|\\
	&\qquad\leq\iint |\dr g_k| \frac{|g_k|^a}{r^a}r^{2(\pp-1)+1+a} |g_k|^{1-a}r\dd r\dd\theta\\
	&\qquad\leq \norma{\dr g_k}{} \left(\iint \frac{|g_k|^{ap}}{r^{ap}}r\dd r\dd\theta\right)^{\frac{1}{p}}\left(\iint r^{(2(\pp-2)+1+a)p'}|g_k|^{(1-a)p'}r\dd r\dd\theta\right)^{\frac{1}{p'}},
	\end{split}
	\end{equation}
	where we have used H\"older's inequality, in particular we need $ \frac{1}{p}+\frac{1}{p'}=\frac{1}{2}$.
	Now we impose that 
	\begin{align}
	ap=2,\qquad (2(\pp-2)+1+a)p'=2(\pp-1).
	\end{align}
	Solving the previous linear system, combined with the H\"older constrains, gives that
	\begin{equation*}
	a=\frac{2-\pp}{\pp}, \qquad p=\frac{2\pp}{2-\pp},\qquad p'=\frac{\pp}{\pp-1}.
	\end{equation*}
	Then by \eqref{eq:proofq12}, \eqref{bd:proofq12} and the choice of the exponents, we have that
	\begin{equation}
	\label{bdn:r(alpha-2)}
	(\pp-1)\norma{r^{\pp-2}g_k}{}^2\leq \norma{\dr g_k}{}\left\|\frac{g_k}{r}\right\|^{\frac{2-\pp}{\pp}}\norma{r^{\pp-1}g_k}{}^{\frac{2(\pp-1)}{\pp}}.
	\end{equation}
	Then multiply \eqref{bdn:r(alpha-2)} by $k^{\frac{2-\pp}{\pp}}$, to get that 
	\begin{align}
	k^{\frac{2-\pp}{\pp}}(\pp-1)\norma{r^{\pp-2}g_k}{}^2&\leq \norma{\dr g_k}{}\left\|\frac{\dtheta g_k}{r}\right\|^{\frac{2-\pp}{\pp}}\norma{r^{\pp-1}g_k}{}^{\frac{2(\pp-1)}{\pp}}\leq \norma{\nabla g_k}{}^{\frac{2}{\pp}}\norma{r^{\pp-1} g_k }{}^{\frac{2(\pp-1)}{\pp}},
	\end{align}
	where in the last line it is crucial that $\pp \in (1,2)$.	
	Then since $1/\pp+(\pp-1)/\pp=1$, we apply Young's inequality, with a free parameter $\lambda$, to get that 
	\begin{align}
	k^{\frac{2-\pp}{\pp}}(\pp-1)\norma{r^{\pp-2}g_k}{}^2\leq \frac{1}{\pp}\lambda^\pp\norma{\nabla g_k}{}^2+\frac{\pp-1}{\pp}\lambda^{-\frac{\pp}{\pp-1}}\norma{r^{\pp-1}g_k}{}^2,
	\end{align}
	or equivalently 
	\begin{align}
	k^{\frac{2-\pp}{\pp}}\pp\lambda^{\frac{\pp}{\pp-1}}\norma{r^{\pp-2}g_k}{}^2\leq \frac{1}{\pp-1}\lambda^{\frac{\pp^2}{\pp-1}}\norma{\nabla g_k}{}^2+\norma{r^{\pp-1}g_k}{}^2.
	\end{align}
	Then let $\sigma=(\pp-1)^{-1}\lambda^{\pp^2/(\pp-1)}$ and we get that 
	\begin{equation}
	k^{\frac{2-\pp}{\pp}}\pp(\pp-1)^{\frac{1}{\pp}}\sigma^{\frac{1}{\pp}}\norma{r^{\pp-2}g_k}{}^2\leq \sigma \norma{\nabla g_k}{}^2+\norma{r^{\pp-1}g_k}{}^2.
	\end{equation}
	Since $k\geq 1$, this concludes the proof of Lemma \ref{lemma:ralpha-2}.
\end{proof}

\subsection*{Acknowledgements}
The authors would like to thank Theodore Drivas and Christian Seis for helpful discussions, and Lorenzo Tamellini for helping producing the
simulation in Figure \ref{fig:coffee3}. This work was in part carried out at Imperial College London, which we thank for the support and hospitality.

%\bibliographystyle{abbrv}
%\bibliography{biblio}

% \bib, bibdiv, biblist are defined by the amsrefs package.
\begin{bibdiv}
\begin{biblist}

\bib{BajerEtAl01}{article}{
      author={Bajer, Konrad},
      author={Bassom, Andrew~P},
      author={Gilbert, Andrew~D},
       title={Accelerated diffusion in the centre of a vortex},
        date={2001},
     journal={Journal of Fluid Mechanics},
      volume={437},
       pages={395\ndash 411},
}

\bib{BG98}{article}{
      author={Bassom, Andrew~P.},
      author={Gilbert, Andrew~D.},
       title={The spiral wind-up of vorticity in an inviscid planar vortex},
        date={1998},
     journal={J. Fluid Mech.},
      volume={371},
       pages={109\ndash 140},
         url={http://dx.doi.org/10.1017/S0022112098001955},
}

\bib{BW13}{article}{
      author={Beck, Margaret},
      author={Wayne, C.~Eugene},
       title={Metastability and rapid convergence to quasi-stationary bar
  states for the two-dimensional {N}avier-{S}tokes equations},
        date={2013},
     journal={Proc. Roy. Soc. Edinburgh Sect. A},
      volume={143},
      number={5},
       pages={905\ndash 927},
         url={http://dx.doi.org/10.1017/S0308210511001478},
}

\bib{BGM15I}{article}{
      author={{Bedrossian}, J.},
      author={{Germain}, P.},
      author={{Masmoudi}, N.},
       title={{Dynamics near the subcritical transition of the 3D Couette flow
  I: Below threshold case}},
        date={2015-06},
     journal={ArXiv e-prints},
      eprint={1506.03720},
}

\bib{BGM15II}{article}{
      author={{Bedrossian}, J.},
      author={{Germain}, P.},
      author={{Masmoudi}, N.},
       title={{Dynamics near the subcritical transition of the 3D Couette flow
  II: Above threshold case}},
        date={2015-06},
     journal={ArXiv e-prints},
      eprint={1506.03721},
}

\bib{BCZ15}{article}{
      author={Bedrossian, Jacob},
      author={Coti~Zelati, Michele},
       title={Enhanced dissipation, hypoellipticity, and anomalous small noise
  inviscid limits in shear flows},
        date={2017},
     journal={Arch. Ration. Mech. Anal.},
      volume={224},
      number={3},
       pages={1161\ndash 1204},
}

\bib{BCZGH15}{article}{
      author={Bedrossian, Jacob},
      author={Coti~Zelati, Michele},
      author={Glatt-Holtz, Nathan},
       title={Invariant {M}easures for {P}assive {S}calars in the {S}mall
  {N}oise {I}nviscid {L}imit},
        date={2016},
     journal={Comm. Math. Phys.},
      volume={348},
      number={1},
       pages={101\ndash 127},
}

\bib{BGM15III}{article}{
      author={Bedrossian, Jacob},
      author={Germain, Pierre},
      author={Masmoudi, Nader},
       title={On the stability threshold for the 3{D} {C}ouette flow in
  {S}obolev regularity},
        date={2017},
     journal={Ann. of Math. (2)},
      volume={185},
      number={2},
       pages={541\ndash 608},
}

\bib{BMV14}{article}{
      author={Bedrossian, Jacob},
      author={Masmoudi, Nader},
      author={Vicol, Vlad},
       title={Enhanced dissipation and inviscid damping in the inviscid limit
  of the {N}avier-{S}tokes equations near the two dimensional {C}ouette flow},
        date={2016},
     journal={Arch. Ration. Mech. Anal.},
      volume={219},
      number={3},
       pages={1087\ndash 1159},
         url={http://dx.doi.org/10.1007/s00205-015-0917-3},
}

\bib{BVW16}{article}{
      author={Bedrossian, Jacob},
      author={Vicol, Vlad},
      author={Wang, Fei},
       title={The {S}obolev stability threshold for 2{D} shear flows near
  {C}ouette},
        date={2018},
     journal={J. Nonlinear Sci.},
      volume={28},
      number={6},
       pages={2051\ndash 2075},
}

\bib{CKRZ08}{article}{
      author={Constantin, P.},
      author={Kiselev, A.},
      author={Ryzhik, L.},
      author={Zlatos, A.},
       title={Diffusion and mixing in fluid flow},
        date={2008},
     journal={Ann. of Math. (2)},
      volume={168},
      number={2},
       pages={643\ndash 674},
         url={http://dx.doi.org/10.4007/annals.2008.168.643},
}

\bib{CZDE18}{article}{
      author={{Coti Zelati}, M.},
      author={{Delgadino}, M.G.},
      author={{Elgindi}, T.M.},
       title={{On the relation between enhanced dissipation time-scales and
  mixing rates}},
        date={2018-06},
     journal={ArXiv e-prints},
      eprint={1806.03258},
}

\bib{CZEW19}{article}{
      author={{Coti Zelati}, Michele},
      author={{Elgindi}, Tarek~M.},
      author={{Widmayer}, Klaus},
       title={{Enhanced dissipation in the Navier-Stokes equations near the
  Poiseuille flow}},
        date={2019-01},
     journal={arXiv e-prints},
       pages={arXiv:1901.01571},
      eprint={1901.01571},
}

\bib{CLS17}{article}{
      author={Crippa, Gianluca},
      author={Luc\`a, Renato},
      author={Schulze, Christian},
       title={Polynomial mixing under a certain stationary {E}uler flow},
        date={2019},
     journal={Phys. D},
      volume={394},
       pages={44\ndash 55},
}

\bib{Deng2013}{article}{
      author={Deng, Wen},
       title={Resolvent estimates for a two-dimensional non-self-adjoint
  operator},
        date={2013},
     journal={Commun. Pure Appl. Anal.},
      volume={12},
      number={1},
       pages={547\ndash 596},
}

\bib{DesvillettesVillani01}{article}{
      author={Desvillettes, L.},
      author={Villani, C.},
       title={On the trend to global equilibrium in spatially inhomogeneous
  entropy-dissipating systems: the linear {F}okker-{P}lanck equation},
        date={2001},
     journal={Comm. Pure Appl. Math.},
      volume={54},
      number={1},
       pages={1\ndash 42},
}

\bib{DMS15}{article}{
      author={Dolbeault, Jean},
      author={Mouhot, Cl{\'e}ment},
      author={Schmeiser, Christian},
       title={Hypocoercivity for linear kinetic equations conserving mass},
        date={2015},
     journal={Trans. Amer. Math. Soc.},
      volume={367},
      number={6},
       pages={3807\ndash 3828},
}

\bib{DubrulleNazarenko94}{article}{
      author={Dubrulle, B},
      author={Nazarenko, S},
       title={On scaling laws for the transition to turbulence in uniform-shear
  flows},
        date={1994},
     journal={Euro. Phys. Lett.},
      volume={27},
      number={2},
       pages={129},
}

\bib{GNN09}{article}{
      author={Gallagher, Isabelle},
      author={Gallay, Thierry},
      author={Nier, Francis},
       title={Spectral asymptotics for large skew-symmetric perturbations of
  the harmonic oscillator},
        date={2009},
     journal={Int. Math. Res. Not. IMRN},
      number={12},
       pages={2147\ndash 2199},
}

\bib{Gallay2017}{article}{
      author={Gallay, Thierry},
       title={Enhanced dissipation and axisymmetrization of two-dimensional
  viscous vortices},
        date={2018},
     journal={Arch. Ration. Mech. Anal.},
      volume={230},
      number={3},
       pages={939\ndash 975},
}

\bib{GGS10}{book}{
      author={Giga, Mi-Ho},
      author={Giga, Yoshikazu},
      author={Saal, J\"{u}rgen},
       title={Nonlinear partial differential equations},
      series={Progress in Nonlinear Differential Equations and their
  Applications},
   publisher={Birkh\"{a}user Boston, Inc., Boston, MA},
        date={2010},
      volume={79},
}

\bib{GNRS18}{article}{
      author={{Grenier}, E.},
      author={{Nguyen}, T.~T.},
      author={{Rousset}, F.},
      author={{Soffer}, A.},
       title={{Linear inviscid damping and enhanced viscous dissipation of
  shear flows by using the conjugate operator method}},
        date={2018-04},
     journal={ArXiv e-prints},
      eprint={1804.08291},
}

\bib{Herau2007}{article}{
      author={H{\'e}rau, Fr{\'e}d{\'e}ric},
       title={Short and long time behavior of the {F}okker-{P}lanck equation in
  a confining potential and applications},
        date={2007},
     journal={J. Funct. Anal.},
      volume={244},
      number={1},
       pages={95\ndash 118},
}

\bib{HerauNier2004}{article}{
      author={H\'{e}rau, Fr\'{e}d\'{e}ric},
      author={Nier, Francis},
       title={Isotropic hypoellipticity and trend to equilibrium for the
  {F}okker-{P}lanck equation with a high-degree potential},
        date={2004},
     journal={Arch. Ration. Mech. Anal.},
      volume={171},
      number={2},
       pages={151\ndash 218},
}

\bib{IMM17}{article}{
      author={{Ibrahim}, S.},
      author={{Maekawa}, Y.},
      author={{Masmoudi}, N.},
       title={{On pseudospectral bound for non-selfadjoint operators and its
  application to stability of Kolmogorov flows}},
        date={2017-10},
     journal={ArXiv e-prints},
      eprint={1710.05132},
}

\bib{LatiniBernoff01}{article}{
      author={Latini, M.},
      author={Bernoff, A.J.},
       title={Transient anomalous diffusion in {Poiseuille} flow},
        date={2001},
     journal={Journal of Fluid Mechanics},
      volume={441},
       pages={399\ndash 411},
}

\bib{LiWeiZhang2017}{article}{
      author={{Li}, T.},
      author={{Wei}, D.},
      author={{Zhang}, Z.},
       title={{Pseudospectral and spectral bounds for the Oseen vortices
  operator}},
        date={2017-01},
     journal={ArXiv e-prints},
      eprint={1701.06269},
}

\bib{LWZ18}{article}{
      author={{Li}, Te},
      author={{Wei}, Dongyi},
      author={{Zhang}, Zhifei},
       title={{Pseudospectral bound and transition threshold for the 3D
  Kolmogorov flow}},
        date={2018-01},
     journal={arXiv e-prints},
      eprint={1801.05645},
}

\bib{MD18}{article}{
      author={Miles, Christopher~J.},
      author={Doering, Charles~R.},
       title={Diffusion-limited mixing by incompressible flows},
        date={2018},
     journal={Nonlinearity},
      volume={31},
      number={5},
       pages={2346\ndash 2350},
}

\bib{RhinesYoung83}{article}{
      author={Rhines, P.B.},
      author={Young, W.R.},
       title={How rapidly is a passive scalar mixed within closed
  streamlines?},
        date={1983},
     journal={Journal of Fluid Mechanics},
      volume={133},
       pages={133\ndash 145},
}

\bib{Villani09}{article}{
      author={Villani, C{\'e}dric},
       title={Hypocoercivity},
        date={2009},
     journal={Mem. Amer. Math. Soc.},
      volume={202},
      number={950},
       pages={iv+141},
}

\bib{WZ18}{article}{
      author={{Wei}, D.},
      author={{Zhang}, Z.},
       title={{Transition threshold for the 3D Couette flow in Sobolev space}},
        date={2018-03},
     journal={ArXiv e-prints},
      eprint={1803.01359},
}

\bib{WZZkolmo17}{article}{
      author={{Wei}, D.},
      author={{Zhang}, Z.},
      author={{Zhao}, W.},
       title={{Linear inviscid damping and enhanced dissipation for the
  Kolmogorov flow}},
        date={2017-11},
     journal={ArXiv e-prints},
      eprint={1711.01822},
}

\bib{WEI18}{article}{
      author={{Wei}, Dongyi},
       title={{Diffusion and mixing in fluid flow via the resolvent estimate}},
        date={2018-11},
     journal={arXiv e-prints},
      eprint={1811.11904},
}

\bib{WZ19}{article}{
      author={Wei, Dongyi},
      author={Zhang, Zhifei},
       title={Enhanced dissipation for the {K}olmogorov flow via the
  hypocoercivity method},
        date={2019},
     journal={Sci. China Math.},
      volume={62},
      number={6},
       pages={1219\ndash 1232},
}

\end{biblist}
\end{bibdiv}

\end{document}